\numberwithin{equation}{section}
\theoremstyle{plain}
\newtheorem{thm}{\protect\theoremname}[section]
  \theoremstyle{plain}
  \newtheorem{prop}[thm]{\protect\propositionname}
  \theoremstyle{plain}
  \newtheorem{lem}[thm]{\protect\lemmaname}
  \theoremstyle{remark}
  \newtheorem{claim}[thm]{\protect\claimname}
  \theoremstyle{plain}
  \newtheorem{cor}[thm]{\protect\corollaryname}
  \providecommand{\claimname}{Claim}
  \providecommand{\corollaryname}{Corollary}
  \providecommand{\lemmaname}{Lemma}
  \providecommand{\propositionname}{Proposition}
\providecommand{\theoremname}{Theorem}
\begin{document}

\title{Tightness of the recentered maximum of log-correlated Gaussian fields}

\author{Javier Acosta, University of Minnesota}
\maketitle
\begin{abstract}
We consider a family of centered Gaussian fields on the $d$-dimensional
unit box, whose covariance decreases logarithmically in the distance
between points. We prove tightness of the recentered maximum of the
Gaussian fields and provide exponentially decaying bounds on the right
and left tails. We then apply this result to a version of the two-dimensional
continuous Gaussian free field.
\end{abstract}

\section{Introduction}

\subsection*{Main result}

Let $\left\{ \left(Y_{\epsilon}^{x}:x\in[0,1]^{d}\right)\right\} _{\epsilon>0}$
be a family of centered Gaussian fields indexed by the $d$-dimensional
unit box $[0,1]^{d}$, where $d$ is any positive integer. Suppose
that the family satisfies, for some constant $0<C_{Y}<\infty$ and
all $x,y\in[0,1]^{d}$, $\epsilon>0$, 

\begin{equation}
\left|Cov\left(Y_{\epsilon}^{x},Y_{\epsilon}^{y}\right)+\log\left(\max\{\epsilon,\left\Vert x-y\right\Vert \}\right)\right|\leq C_{Y}\label{logcorrY}
\end{equation}
and 

\begin{equation}
\mathbb{E}\left[\left(Y_{\epsilon}^{x}-Y_{\epsilon}^{y}\right)^{2}\right]\leq C_{Y}\epsilon^{-1}\left\Vert x-y\right\Vert \text{ if }\left\Vert x-y\right\Vert \leq\epsilon,\label{l2distY}
\end{equation}
where $\left\Vert \cdot\right\Vert $ is Euclidean distance. Display
\eqref{logcorrY} implies that the covariance is logarithmic for distant
points and that the variance is nearly constant. The second condition
is imposed so that the field does not vary too much for close points.
Display \eqref{l2distY}, basic relations between the moments of Gaussian
random variables and Kolmogorov's continuity criterion (see \cite[Theorem 1.4.17]{k})
imply that the fields have continuous modifications. 

When $d=2$, an example of a field satisfying \eqref{logcorrY} and
\eqref{l2distY} is the bulk of the mollified continuous Gaussian
free field (MGFF), which will be defined in Section 3.1, and will
be the object of our attention in Section 3.

Set $m_{\epsilon}=m_{\epsilon,d}=\sqrt{2d}\log(1/\epsilon)-\frac{3/2}{\sqrt{2d}}\log\log(1/\epsilon)$.
The main result of this paper is:
\begin{thm}
\label{main} There exist constants $0<c,C<\infty$ (depending on
$C_{Y}$ and $d$) such that, for all $\epsilon>0$ small enough, 

\begin{equation}
\mathbb{P}\left(\left|\max_{x\in[0,1]^{d}}Y_{\epsilon}^{x}-m_{\epsilon}\right|\geq\lambda\right)\leq Ce^{-c\lambda}\label{rtubY}
\end{equation}
for all $\lambda\geq0$.
\end{thm}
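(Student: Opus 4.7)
The plan is to prove \eqref{rtubY} by bounding the right tail $\mathbb{P}(\max_{x}Y_{\epsilon}^{x}\geq m_{\epsilon}+\lambda)$ and the left tail $\mathbb{P}(\max_{x}Y_{\epsilon}^{x}\leq m_{\epsilon}-\lambda)$ separately, exploiting the approximate multiscale structure implied by \eqref{logcorrY}. Up to a bounded additive error, the covariance $-\log\max(\epsilon,\|x-y\|)$ equals that of a branching random walk on a $2^{d}$-ary tree of depth $K=\lceil\log_{2}(1/\epsilon)\rceil$ with Gaussian step variance $\log 2$, where the common ancestor of two dyadic leaves $x,y$ sits at depth roughly $-\log_{2}\|x-y\|$. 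Kahane's inequality then transfers tail estimates between $Y_{\epsilon}$ and this hierarchical field in both directions up to multiplicative constants, while a Borell--TIS application using \eqref{l2distY} reduces the question to the maximum over an $\epsilon$-net at the cost of only a bounded additive correction.

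For the right tail $\mathbb{P}(\max Y_{\epsilon}^{x}\geq m_{\epsilon}+\lambda)\leq Ce^{-c\lambda}$, a naive union bound over the $\epsilon$-net is off by a factor of $(\log(1/\epsilon))^{3/2}$ arising precisely from the $\log\log$-correction in $m_{\epsilon}$. To recover an exponential bound I will run a modified first-moment / ballot argument: count only grid points $x$ for which the coarse-grained values $Y_{2^{-k}}^{x}$ lie below a linear barrier of slope $\sqrt{2d}$ with slack $\lambda$ for every scale $k=1,\dots,K$. Bramson-style ballot estimates yield expected count at most $Ce^{-c\lambda}$, while the barrier-violating contribution is handled inductively in $k$ by applying the right-tail bound at the coarser scale $2^{-k}$.

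For the left tail $\mathbb{P}(\max Y_{\epsilon}^{x}\leq m_{\epsilon}-\lambda)\leq Ce^{-c\lambda}$, I will first obtain the qualitative statement $\mathbb{P}(\max Y_{\epsilon}^{x}>m_{\epsilon}-c_{1})\geq c_{0}>0$ via a modified second moment method. Let $N$ count grid points whose multiscale profile $(Y_{2^{-k}}^{x})_{k}$ stays inside a width-$O(1)$ tube around the linear interpolation from $0$ to $m_{\epsilon}$, with $Y_{\epsilon}^{x}\in[m_{\epsilon},m_{\epsilon}+1]$. Ballot estimates give $\mathbb{E}[N]\geq c$, the approximate branching decorrelation from \eqref{logcorrY} gives $\mathbb{E}[N^{2}]\leq C(\mathbb{E}[N])^{2}$, and Paley--Zygmund concludes. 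I will then boost $c_{0}$ to $1-Ce^{-c\lambda}$ by scale invariance: tile $[0,1]^{d}$ by $e^{c\lambda}$ sub-boxes of side $s=e^{-c\lambda/d}$, apply the qualitative bound inside each sub-box after rescaling it to the unit cube at scale $\epsilon/s$, and glue to the ambient field using that the common low-frequency part of $Y_{\epsilon}$ is Gaussian of variance $\log(1/s)=c\lambda/d$ and hence fluctuates on a scale $\sqrt{\lambda}\ll\lambda$.

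The main obstacle will be the left tail, specifically the combination of the second-moment calculation and the sub-box boost. The $O(1)$ errors in \eqref{logcorrY} must be carried through a long multiscale ballot estimate without degrading the constants in $\mathbb{E}[N^{2}]\leq C(\mathbb{E}[N])^{2}$, which forces careful bookkeeping of the barrier slack, and independence across sub-boxes is only approximate, so the boost will require a Slepian/Kahane comparison with an auxiliary field in which the low-frequency part has been replaced by an independent copy in each sub-box.
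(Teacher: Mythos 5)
Your high-level plan (Slepian/Kahane comparison to a hierarchical field, then first/second moment plus ballot estimates on that field) is in the same spirit as the paper, but there are two concrete gaps that would break the argument as written.

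First, the premise that ``up to a bounded additive error'' the covariance $-\log\max(\epsilon,\|x-y\|)$ equals that of a BRW on a $2^d$-ary dyadic tree, with common ancestor at depth $\approx -\log_2\|x-y\|$, is false. Two points at Euclidean distance $\approx\epsilon$ on opposite sides of the dyadic hyperplane $x_1=1/2$ branch at depth $0$, so the BRW covariance is $O(1)$ while $-\log\max(\epsilon,\|x-y\|)\approx\log(1/\epsilon)$. This dyadic artifact is unbounded, not $O(1)$, and it is precisely the reason Bramson and Zeitouni introduced the \emph{modified} BRW used in this paper. It kills your left-tail plan outright: for a Slepian/Kahane bound of the form $\mathbb{P}(\max Y\leq\lambda)\leq\mathbb{P}(\max W\leq\lambda)$ you need $\mathrm{Cov}(Y^x,Y^y)\leq\mathrm{Cov}(W^x,W^y)$ for all $x\neq y$, and no rescaling or variance tuning can make the raw BRW covariance dominate $-\log\|x-y\|+O(1)$ across dyadic boundaries. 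The paper gets the required inequality from the MBRW (whose covariance is $-\log\|v-u\|_\infty+O(1)$ with \emph{no} dyadic artifact, see \eqref{covMBRW}) after a scale change $\rho$; you would need to replace the dyadic BRW by the MBRW or some other rotation/translation-averaged hierarchical field before any of the comparison steps go through.

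Second, both your right-tail ballot argument and your left-tail second-moment computation are phrased in terms of the ``multiscale profile $(Y_{2^{-k}}^x)_k$,'' but the hypotheses \eqref{logcorrY}--\eqref{l2distY} say nothing about a joint distribution across scales: the fields $Y_{\epsilon}$ for distinct $\epsilon$ need not even live on the same probability space, and no covariance between $Y_{2^{-k}}^x$ and $Y_{2^{-j}}^x$ is assumed. Ballot and tube estimates cannot be run on the $Y$-family itself. They must be run on an auxiliary field that actually carries an exact branching/Markov structure; this is what the paper does, doing \emph{all} the first- and second-moment work on the MBRW in the Appendix (Propositions \ref{proofrtlbMBRW} and \ref{proofrtubMBRW}), and transferring to $Y_\epsilon$ only via single-scale Slepian comparisons (Propositions \ref{rtcomparison} and \ref{ltcomparison}). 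For the continuous index set on the right-tail side, note also that Borell--TIS plus an $\epsilon$-net gives only a variance-scale ($\sqrt{\log(1/\epsilon)}$) Gaussian correction, not an $O(1)$ additive one; the paper instead adds an independent Brownian sheet so that \eqref{l2distY} can be absorbed exactly inside each $\epsilon$-box.

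If you repair these two points---replace the dyadic BRW by the MBRW (or prove a comparable averaged-tree field has $-\log\|x-y\|+O(1)$ covariance without dyadic defects), and move all multiscale/ballot work onto that auxiliary field rather than onto $Y$ itself---your outline becomes essentially the paper's proof.
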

\noindent Theorem \ref{main} implies, in particular, that $\left\{ \max_{x\in[0,1]^{d}}Y_{\epsilon}^{x}-m_{\epsilon}:\epsilon>0\right\} $
is tight and that 
\[
\left|\mathbb{E}\left[\max_{x\in[0,1]^{d}}Y_{\epsilon}^{x}\right]-m_{\epsilon}\right|\leq C
\]
 for some constant $C$ depending on $C_{Y}$ and $d$. 

The main idea of the proof of Theorem \ref{main} is to use Slepian's
Lemma (see \cite[Theorem 2.2.1]{rf}) to compare the maximum of the
field $Y_{\epsilon}$ with the maximum of the modified branching random
walk (MBRW), a field introduced by Bramson and Zeitouni in \cite{bz}.
Since Slepian's Lemma only allows comparison of fields with the same
index set, we will add an appropriately chosen independent continuous
field to the MBRW. Adding an independent continuous field to the MBRW
does not change the maximum much, provided the continuous field is
small and smooth enough. These fields are defined in detail in Section
2.1. After defining the fields, we compare the right and left tails
in Sections 2.2 and 2.3, respectively. We then show, in Section 3,
that Theorem \ref{main} implies tightness of the recentered maximum
of the MGFF.

A comment on constants: $c$ will always denote a small positive constant
and $C$ will always denote a large positive constant. Both constants
are allowed to change from line to line. The dependence of the constants
will be explicit or will be clear from the context. The phrase ``absolute
constant'' will refer to fixed numbers that are independent of everything.

\subsection*{Related work}

Our approach is motivated by recent advances in the study of the two
dimensional discrete Gaussian free field (DGFF). In \cite{bz}, Bramson
and Zeitouni computed the expected maximum of the DGFF up to an order
1 error and concluded tightness of the recentered maximum. In \cite{d},
Ding obtained bounds on the right and left tail of the recentered
maximum of the DGFF. Later on, in \cite{bdz}, Bramson, Ding and Zeitouni
proved convergence in distribution of the recentered maximum. The
approach of this line of research is to use first and second moment
methods, together with decomposition properties of the DGFF, to obtain
good estimates on tail events. Previous work on the DGFF includes
\cite{bdg}, where Bolthausen, Deuschel and Giacomin obtained asympotics
for the maximum of the DGFF, and \cite{da}, where Daviaud studied
the extreme points of the DGFF. On the other hand, previous work on
the continuous Gaussian free field (CGFF) includes \cite{hmp}, where
Hu, Miller, and Peres studied the Hausdorff dimension of the ``thick
points'' of the MGFF, which are closely related to the work of Daviaud.
We also mention \cite{dy} for a nice discussion of Gaussian fields
induced by Markov processes, and \cite{s} for a survey on the CGFF.

Our main result implies, in particular, an analog of \cite[Theorem 1.1]{bz}
for the MGFF. Our approach consists on extending the MBRW by Brownian
sheet, so that it is possible to compare the extended field with scaled
log-correlated continuous fields. Log-correlated Gaussian fields are
subject of current interest (see \cite{drsv}, \cite{m}, \cite{mrv}).
In particular, in \cite{m}, Madaule proved convergence for stationary
centered Gaussian fields $\left(Z_{\epsilon}(x):x\in[0,1]^{d}\right)$
whose covariance satisfies
\[
Cov(Z_{\epsilon}(0),Z_{\epsilon}(x))=\int_{0}^{\log(1/\epsilon)}k(e^{r}x)dr,
\]
where the fixed kernel $k:\mathbb{R}^{d}\to\mathbb{R}$ is of class
$C^{1}$, vanishes outside $[-1,1]^{d}$, and satisfies $k(0)=1$.
Theorem \ref{main} has weaker conditions on the covariance structure,
and consequently, only tightness is achieved. 

In \cite{mrv}, the authors proved the so called ``Freezing Theorem
for GFF in planar domains'' for a sequence of Gaussian fields approximating
the continuous GFF by cutting-off white noise, so that the covariance
kernel is proportional to the function $G_{t}:[0,1]^{2}\times[0,1]^{2}\to\mathbb{R}$
given by
\[
G_{t}(x,y)=\int_{e^{-t}}^{\infty}p_{\partial[0,1]^{2}}(r,x,y)dr,
\]
where $p_{\partial[0,1]^{2}}(r,x,y)$ is the transition probability
density of a Brownian motion killed at $\partial[0,1]^{2}$. In the
present paper, we consider a sequence of fields approximating the
GFF by mollifying the Green function (see \eqref{green}), and we
prove tightness. Convergence for the MGFF is expected to follow by
adapting of the arguments given in \cite{bdz}.

\section{Comparison to the MBRW}

\subsection{Auxiliary fields}

In this subsection, we rigorously introduce the fields we mentioned
in Section 1. A few properties of these fields will be stated; the
proofs of these properties will be given in the Appendix.

In order to define these fields, it will be notationally more convenient
to use $[0,1)^{d}$ instead of $[0,1]^{d}$ as the index set. This
will not affect the main result because the supremum of $Y_{\epsilon}$
over $[0,1)^{d}$ is the same, due to continuity, as the maximum over
$[0,1]^{d}$.

\subsubsection*{Modified branching random walk}

We first divide $[0,1)^{d}$ into boxes of side length $\epsilon>0$.
Let $V_{\epsilon}=\left(\epsilon\mathbb{Z}^{d}\right)\cap[0,1)^{d}$
and, for $v\in V_{\epsilon}$, let $\square_{\epsilon}^{v}=[v,v+\epsilon)^{d}\cap[0,1)^{d}$.
Moreover, if $x\in\square_{\epsilon}^{v}$, let $[x]:=v$. The set
$V_{\epsilon}$ is, of course, a discretized version of $[0,1)^{d}$. 

We now define the \emph{modified branching random walk} (MBRW) as
the centered Gaussian field $\left\{ \xi_{\epsilon}^{v}(t):v\in V_{\epsilon},0\leq t\leq\log(1/\epsilon)\right\} $
with covariance structure
\begin{equation}
Cov(\xi_{\epsilon}^{v}(t),\xi_{\epsilon}^{u}(s))=\int_{0}^{\min\left\{ t,s\right\} }\prod_{1\leq i\leq d}(1-e^{r}\left|v_{i}-u_{i}\right|)_{+}dr\label{defMBRW}
\end{equation}
for all $0\leq t,s\leq\log\left(1/\epsilon\right)$ and $v,u\in V_{\epsilon}$,
where $v_{i}$ is the $i$-th coordinate of $v$, and $\left(\cdot\right)_{+}=\max\left\{ \cdot,0\right\} $.
For simplicity, write $\xi_{\epsilon}^{v}=\xi_{\epsilon}^{v}(\log(1/\epsilon))$. 

Note that, for each point $v\in V_{\epsilon}$, the process $\left(\xi_{\epsilon}^{v}(t)\right)_{t}$
is a standard Brownian motion. Moreover, for each pair $v,u\in V_{\epsilon}$,
the Brownian motions are correlated until $t=-\log\left\Vert v-u\right\Vert _{\infty}$,
at which time their increments become independent. The end time is
$t=\log\left(1/\epsilon\right)$, because, for the ``usual'' $d$-ary
branching random walk, it takes $\log(1/\epsilon)$ units of time
to generate $\left|V_{\epsilon}\right|$ particles (see the proof
of Proposition \ref{proofrtubMBRW} for a definition of the usual
$d$-ary branching random walk).

It will be proved in the Appendix (see Proposition \ref{proofdefMBRW})
that the MBRW exists and that it satisfies
\begin{equation}
Var(\xi_{\epsilon}^{v})=\log(1/\epsilon)\label{varMBRW}
\end{equation}
and, for $v\neq u$ (so that $\left\Vert v-u\right\Vert _{\infty}\geq\epsilon$),
\begin{equation}
-\log\left\Vert v-u\right\Vert _{\infty}-C\leq Cov(\xi_{\epsilon}^{v},\xi_{\epsilon}^{u})\leq-\log\left\Vert v-u\right\Vert _{\infty}\label{covMBRW}
\end{equation}
for some constant $C$ depending on $d$. The MBRW also satisfies
(see Proposition \ref{proofrtlbMBRW}) 
\begin{equation}
\mathbb{P}\left(\max_{v\in V_{\epsilon}}\xi_{\epsilon}^{v}\geq m_{\epsilon}\right)\geq c>0,\label{rtlbMBRW}
\end{equation}
where $c$ is a constant depending only on $d$. It will also be proved
in the Appendix (see Proposition \ref{proofrtubMBRW}) that there
exist constants $0<c,C<\infty$ (depending on $d$) such that
\begin{equation}
\mathbb{P}\left(\max_{v\in A}\xi_{\epsilon}^{v}\geq m_{\epsilon}+z\right)\leq C\left(\epsilon^{d}\left|A\right|\right)^{1/2}e^{-cz}\label{rtubMBRW}
\end{equation}
for all $A\subset V_{\epsilon}$, $z\in\mathbb{R}$ and $\epsilon>0$
small enough, where $\left|A\right|$ is the cardinality of $A$.

\subsubsection*{Brownian sheet}

As mentioned before, we will need an additional continuous Gaussian
field. For $x=(x_{i})_{i\leq d}\in\mathbb{R}_{+}^{d}$, let $\psi^{x}$
denote the centered standard Brownian sheet. Recall that it satisfies

\[
\mathbb{E}\left[\psi^{x}\psi^{y}\right]=\prod_{i\leq d}\min\left\{ x_{i},y_{i}\right\} .
\]
Define a new field $\left(\psi_{\epsilon}^{x}:x\in[0,1)^{d}\right)$,
depending on a parameter $p\geq1$, as follows: for $v\in V_{\epsilon}$,
let $l$ be the linear map from $\square_{\epsilon}^{v}$ onto $[p,2p)^{d}$
sending $v$ to $(p)_{i\leq d}=(p,p,\ldots,p)$. Set

\begin{equation}
\left(\psi_{\epsilon}^{x}:x\in\square_{\epsilon}^{v}\right):\overset{d}{=}\left(\psi^{l(x)}:x\in\square_{\epsilon}^{v}\right)=\left(\psi^{x}:x\in[p,2p)^{d}\right),\label{defBS}
\end{equation}
for each $v\in V_{\epsilon}$, and choose $\psi_{\epsilon}^{x}$ and
$\psi_{\epsilon}^{y}$ to be independent if $[x]\neq[y]$. Note that
the collection of fields $\left\{ \left(\psi_{\epsilon}^{x}:x\in\square_{\epsilon}^{v}\right)\right\} _{v\in V_{\epsilon}}$
consist of i.i.d copies of Brownian sheet on $[p,2p)^{d}$. Using
the covariance structure of the Brownian sheet, it is not hard to
see that

\begin{equation}
p^{d}\leq Var\left(\psi_{\epsilon}^{x}\right)\leq(2p)^{d},\label{varBS}
\end{equation}
for all $x\in[0,1)^{d}$, and that

\begin{equation}
p^{d}\epsilon^{-1}\left\Vert x-y\right\Vert _{1}\leq\mathbb{E}\left[\left(\psi_{\epsilon}^{x}-\psi_{\epsilon}^{y}\right)^{2}\right]\leq(2p)^{d}\epsilon^{-1}\left\Vert x-y\right\Vert _{1},\label{l2distBS}
\end{equation}
for all $[x]=[y]$. Note that $p$ can be chosen as large as desired.

To understand the motivation behind the previous definitions, we invite
the reader to compare the bounds \eqref{logcorrY} and \eqref{l2distY}
with \eqref{covMBRW} and \eqref{l2distBS}, respectively. These bounds
will be used in the next sections.

We now proceed to the comparison of the right and left tail of the
maximum of the field $Y_{\epsilon}$ (which was defined in Section
1 and satisfies \eqref{logcorrY} and \eqref{l2distY}) and the maximum
of an appropriate combination of the fields $\xi_{\epsilon}$ and
$\psi_{\epsilon}$ (which will be specified in the next section).
Note that we will only use Brownian sheet when comparing the right
tail; for the left tail, we will compare directly the MBRW with the
field $Y_{\epsilon}$ on a discrete index set.

\subsection{The right tail}

Recall from Section 1 that the field $Y_{\epsilon}$ satisfies \eqref{logcorrY}
and \eqref{l2distY}, by definition. 
\begin{prop}
\label{rtcomparison} For $\epsilon>0$, let $\left(\xi_{\epsilon}^{v}:v\in V_{\epsilon}\right)$
and $\left(\psi_{\epsilon}^{x}:x\in[0,1)^{d}\right)$ be independent
fields, defined as in \eqref{defMBRW} and \eqref{defBS}, respectively.
Then, there exist $\delta>0$ small enough and $p$ large enough (depending
on $C_{Y}$ and $d$) such that

\[
\mathbb{P}\left(\sup_{x\in[0,1)^{d}}Y_{\delta\epsilon}^{\delta x}\geq\lambda\right)\leq\mathbb{P}\left(\sup_{x\in[0,1)^{d}}a(x)\xi_{\epsilon}^{[x]}+\psi_{\epsilon}^{x}\geq\lambda\right)
\]
for all $\epsilon>0$ and all $\lambda\in\mathbb{R}$, where $a(x):=\sqrt{\left(Var(Y_{\delta\epsilon}^{\delta x})-Var(\psi_{\epsilon}^{x})\right)/Var(\xi_{\epsilon}^{[x]})}$.\end{prop}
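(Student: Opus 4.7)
The plan is to apply Slepian's inequality \cite[Theorem 2.2.1]{rf} to the centered Gaussian fields $\tilde{Y}_{\epsilon}^{x}:=Y_{\delta\epsilon}^{\delta x}$ and $X_{\epsilon}^{x}:=a(x)\xi_{\epsilon}^{[x]}+\psi_{\epsilon}^{x}$ on $[0,1)^{d}$. By the very definition of $a(x)$ and the independence of $\xi_{\epsilon}$ and $\psi_{\epsilon}$, $Var(X_{\epsilon}^{x})=Var(\tilde{Y}_{\epsilon}^{x})$ automatically, so I only need to verify the covariance inequality
\[
Cov(X_{\epsilon}^{x},X_{\epsilon}^{y})\leq Cov(\tilde{Y}_{\epsilon}^{x},\tilde{Y}_{\epsilon}^{y}),\qquad x\neq y.
\]
For $a(x)$ to even be a real number I also need $Var(\tilde{Y}_{\epsilon}^{x})\geq Var(\psi_{\epsilon}^{x})$; by \eqref{logcorrY} and \eqref{varBS}, the condition $\log(1/\delta)\geq(2p)^{d}+C_{Y}$ suffices for $\epsilon<1$, which is the first reason to pick $\delta$ small after $p$.

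When $[x]=[y]$, the MBRW contributions in $X^{x}-X^{y}$ collapse to $(a(x)-a(y))\xi_{\epsilon}^{[x]}$ and, by independence of $\xi$ and $\psi$,
\[
\mathbb{E}\left[\left(X_{\epsilon}^{x}-X_{\epsilon}^{y}\right)^{2}\right]\geq\mathbb{E}\left[\left(\psi_{\epsilon}^{x}-\psi_{\epsilon}^{y}\right)^{2}\right]\geq p^{d}\epsilon^{-1}\|x-y\|_{1}
\]
by \eqref{l2distBS}. To dominate $\mathbb{E}[(\tilde{Y}_{\epsilon}^{x}-\tilde{Y}_{\epsilon}^{y})^{2}]$ I would split on $\|x-y\|$: when $\|x-y\|\leq\epsilon$, the hypothesis \eqref{l2distY} applied at scale $\delta\epsilon$ to $\delta x,\delta y$ yields $\leq C_{Y}\epsilon^{-1}\|x-y\|$, which is beaten by the Brownian-sheet contribution as soon as $p^{d}\geq C_{Y}$; when $\epsilon<\|x-y\|\leq\sqrt{d}\,\epsilon$ (still within the same box), \eqref{logcorrY} yields a bound depending only on $C_{Y}$ and $d$, beaten by $p^{d}\epsilon^{-1}\|x-y\|_{1}\geq p^{d}$ once $p$ is large enough.

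The main obstacle is the off-diagonal case $[x]\neq[y]$, where $\psi_{\epsilon}^{x}$ and $\psi_{\epsilon}^{y}$ are independent and hence $Cov(X_{\epsilon}^{x},X_{\epsilon}^{y})=a(x)a(y)Cov(\xi_{\epsilon}^{[x]},\xi_{\epsilon}^{[y]})$. Letting $u:=\|[x]-[y]\|_{\infty}\geq\epsilon$ and combining the upper bound in \eqref{covMBRW} with the lower bound in \eqref{logcorrY}, the covariance inequality reduces to
\[
a(x)a(y)\log(1/u)\leq\log(1/\delta)+\log(1/\max\{\epsilon,\|x-y\|\})-C_{Y}.
\]
A triangle inequality in $\ell^{\infty}$ gives $\|x-y\|_{\infty}\leq u+2\epsilon\leq 3u$, so $\log(\max\{\epsilon,\|x-y\|\}/u)\leq\log(3\sqrt{d})$, while combining \eqref{logcorrY}, \eqref{varMBRW}, \eqref{varBS} with $2ab\leq a^{2}+b^{2}$ yields $a(x)a(y)-1\leq(\log(1/\delta)+C_{Y}-p^{d})/\log(1/\epsilon)$. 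Multiplying through by $\log(1/u)\in[0,\log(1/\epsilon)]$ and rearranging reduces the target to the pair of conditions $p^{d}\geq2C_{Y}+\log(3\sqrt{d})$ and $\log(1/\delta)\geq C_{Y}+\log(3\sqrt{d})$. The delicate step is thus the order of quantification: $p$ must first be taken so large, depending only on $C_{Y}$ and $d$, that $a(x)a(y)$ stays close enough to $1$ uniformly in $\epsilon$ for the $\log(1/\delta)$ slack gained by rescaling $Y$ to absorb the excess; then $\delta$ is chosen small enough to satisfy both this constraint and the realness condition on $a(x)$. Once the covariance inequality holds, Slepian's inequality applied along a countable dense subset of $[0,1)^{d}$, combined with the continuous modification of $\tilde{Y}_{\epsilon}$ and the within-box continuity of $X_{\epsilon}$, yields the stated probabilistic comparison.
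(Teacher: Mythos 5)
Your argument follows the same skeleton as the paper's: Slepian's lemma, with the case split $[x]=[y]$ versus $[x]\neq[y]$ and the same estimates \eqref{l2distY}, \eqref{l2distBS}, \eqref{logcorrY}, \eqref{covMBRW}. The genuinely different point is the treatment of $a(x)$ in the off-diagonal case. The paper first fixes $\delta$ (from the covariance mismatch in case $[x]\neq[y]$) and then takes $p$ so large, depending on $\delta$, that $a(x)\leq 1$; the off-diagonal bound then reduces cleanly to $a(x)a(y)Cov(\xi_{\epsilon}^{[x]},\xi_{\epsilon}^{[y]})\leq Cov(\xi_{\epsilon}^{[x]},\xi_{\epsilon}^{[y]})$. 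You instead permit $a(x)>1$, control the excess $a(x)a(y)-1$ via $2ab\leq a^{2}+b^{2}$ together with \eqref{logcorrY}, \eqref{varMBRW}, \eqref{varBS}, and absorb it using $\log(1/u)\leq\log(1/\epsilon)$, which needs only $p^{d}\geq 2C_{Y}+O_{d}(1)$. Reversing the order of quantification — $p$ first, then $\delta$ with $\log(1/\delta)\geq(2p)^{d}+C_{Y}$ — has the concrete advantage that $a(x)$ is automatically real for every $\epsilon\in(0,1)$, whereas the paper's ordering (fix $\delta$, then inflate $p$) tacitly requires $\epsilon$ small enough for $Var(Y_{\delta\epsilon}^{\delta x})\geq Var(\psi_{\epsilon}^{x})$. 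You also separate out the within-box subcase $\epsilon<\|x-y\|\leq\sqrt{d}\,\epsilon$, which the paper does not explicitly address but which your $p^{d}\gtrsim C_{Y}+\log d$ constraint handles. So: same method, correct, with a different and somewhat more careful bookkeeping of the two parameters that avoids the implicit smallness-of-$\epsilon$ assumption in the published argument.
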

\begin{proof}
We check the hypotheses of Slepian's Lemma (see \cite[Theorem 2.2.1]{rf}).
The variance of the fields $Y_{\delta\epsilon}^{\delta x}$ and $a(x)\xi_{\epsilon}^{[x]}+\psi_{\epsilon}^{x}$
are equal by the definition of $a(x)$. We first choose $p$ so that
$a(x)\leq1$. Note that \eqref{logcorrY} and \eqref{varBS} imply
\[
a(x)^{2}=\frac{Var(Y_{\delta\epsilon}^{\delta x})-Var(\psi_{\epsilon}^{x})}{Var(\xi_{\epsilon}^{[x]})}\leq\frac{\log(1/\epsilon)+\log(1/\delta)+C_{Y}-p^{d}}{\log(1/\epsilon)},
\]
so, by choosing $p$ large enough (depending on $C_{Y}$, $d$ and
$\delta$), we obtain $a(x)\leq1$, for all $x$.

We now compare the covariance for points $x\neq y$, for which we
distinguish two cases:

1. $[x]=[y]$ (that is, $\square_{\epsilon}^{[x]}=\square_{\epsilon}^{[y]}$).
In this case, \eqref{l2distY} and \eqref{l2distBS} imply
\[
\mathbb{E}\left[\left(Y_{\delta\epsilon}^{\delta x}-Y_{\delta\epsilon}^{\delta y}\right)^{2}\right]\leq C_{Y}(\delta\epsilon)^{-1}\left\Vert \delta x-\delta y\right\Vert \leq p^{d}\epsilon^{-1}\left\Vert x-y\right\Vert _{1}\leq\mathbb{E}\left[\left(\psi_{\epsilon}^{x}-\psi_{\epsilon}^{y}\right)^{2}\right]
\]
\[
\leq\mathbb{E}\left[\left(a(x)\xi_{\epsilon}^{[x]}+\psi_{\epsilon}^{x}-a(y)\xi_{\epsilon}^{[y]}-\psi_{\epsilon}^{y}\right)^{2}\right]
\]
for $p$ large enough (depending on $C_{Y}$). The last inequality
is due the independence between $\xi_{\epsilon}$ and $\psi_{\epsilon}$.

2. $[x]\neq[y]$. In this case, we can apply \eqref{covMBRW} and
the independence between $\xi_{\epsilon}$, $\psi_{\epsilon}^{[x]}$
and $\psi_{\epsilon}^{[y]}$ to obtain
\[
Cov(a(x)\xi_{\epsilon}^{[x]}+\psi_{\epsilon}^{x},a(y)\xi_{\epsilon}^{[y]}+\psi_{\epsilon}^{y})\leq a(x)a(y)Cov(\xi_{\epsilon}^{[x]},\xi_{\epsilon}^{[y]})\leq a(x)a(y)\left(-\log\left\Vert [x]-[y]\right\Vert +C\right).
\]
But $a(x)a(y)\leq1$, so
\[
Cov(a(x)\xi_{\epsilon}^{[x]}+\psi_{\epsilon}^{x},a(y)\xi_{\epsilon}^{[y]}+\psi_{\epsilon}^{y})\leq-\log\left\Vert [x]-[y]\right\Vert +C.
\]
Note that $-\log\left\Vert [x]-[y]\right\Vert \leq-\log\max\left\{ \epsilon,\left\Vert x-y\right\Vert \right\} +C$.
Applying \eqref{logcorrY}, we obtain
\[
-\log\max\left\{ \epsilon,\left\Vert x-y\right\Vert \right\} +C\leq-\log\max\left\{ \delta\epsilon,\left\Vert \delta x-\delta y\right\Vert \right\} -C_{Y}\leq Cov(Y_{\delta\epsilon}^{\delta x},Y_{\delta\epsilon}^{\delta y})
\]
for some $\delta>0$ small enough (depending on $C_{Y}$ and $d$).
Proposition \ref{rtcomparison} follows now from Slepian's Lemma. 
\end{proof}
Proposition \ref{rtcomparison} provides an upper bound for the right
tail of the supremum of $Y_{\delta\epsilon}$ taken over the $\delta$-box
$\delta[0,1)^{d}$. The same proof works for any $\delta$-box. Therefore,
a union bound implies

\begin{equation}
\mathbb{P}\left(\sup_{x\in[0,1)^{d}}Y_{\delta\epsilon}^{x}\geq\lambda\right)\leq\left(\frac{1}{\delta}\right)^{d}\mathbb{P}\left(\sup_{x\in[0,1)^{d}}a(x)\xi_{\epsilon}^{[x]}+\psi_{\epsilon}^{x}\geq\lambda\right)\label{union}
\end{equation}
for all $\lambda\in\mathbb{R}$. 

We now provide an upper bound for the probability on the right hand
side of the previous display. We first prove an upper bound on the
supremum of the Brownian sheet.
\begin{lem}
\label{rtubBS} There exist constants $0<c,C<\infty$ (depending on
$p$ and $d$) such that

\[
\sup_{v\in V_{\epsilon}}\mathbb{P}\left(\sup_{x\in\square_{\epsilon}^{v}}\psi_{\epsilon}^{x}\geq\lambda\right)\leq Ce^{-c\lambda^{2}}
\]
for all $\lambda\geq0,\epsilon>0$.\end{lem}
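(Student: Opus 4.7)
The plan is to reduce Lemma \ref{rtubBS} to a single statement about the standard Brownian sheet on $[p,2p)^d$ and then apply a standard Gaussian concentration argument.

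First I would observe that, by the very definition \eqref{defBS} of $\psi_\epsilon$, the restriction $(\psi_\epsilon^x : x\in \square_\epsilon^v)$ is distributed exactly as $(\psi^x : x\in[p,2p)^d)$, the standard Brownian sheet on $[p,2p)^d$. In particular, the law of $\sup_{x\in\square_\epsilon^v}\psi_\epsilon^x$ is the same for every $v\in V_\epsilon$ and every $\epsilon>0$, so it is enough to bound $\mathbb{P}(M\geq\lambda)$ with
\[
M := \sup_{x\in[p,2p]^d}\psi^x,
\]
where I replace $[p,2p)^d$ by its compact closure using sample-path continuity of the Brownian sheet.

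Next I would show $M$ is a.s.\ finite and then apply the Borell--TIS inequality (or equivalently Gaussian concentration for suprema of centered Gaussian processes with continuous sample paths on a compact index set). The sheet has continuous modifications, and the a.s.\ finiteness of $M$ can be gotten either from a crude chaining argument on $[p,2p]^d$ or simply from continuity plus compactness. Once $M<\infty$ a.s., Borell--TIS gives
\[
\mathbb{P}\bigl(M - \mathbb{E}[M]\geq t\bigr) \leq \exp\!\left(-\frac{t^2}{2\sigma^2}\right), \qquad t\geq 0,
\]
where $\sigma^2 = \sup_{x\in[p,2p]^d}\mathrm{Var}(\psi^x) = (2p)^d$ by \eqref{varBS}.

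Finally, since $\mathbb{E}[M]$ is a finite constant depending only on $p$ and $d$, for $\lambda\geq 2\mathbb{E}[M]$ I take $t=\lambda-\mathbb{E}[M]\geq \lambda/2$ and obtain $\mathbb{P}(M\geq\lambda)\leq e^{-\lambda^2/(8(2p)^d)}$; for $\lambda\leq 2\mathbb{E}[M]$ the inequality is trivial upon enlarging $C$. Combining both ranges gives constants $c,C$ depending only on $p$ and $d$ for which $\mathbb{P}(M\geq\lambda)\leq C e^{-c\lambda^2}$ uniformly in $\lambda\geq 0$, which is exactly the claim. There is no real obstacle here; the only subtlety is checking the hypotheses of Borell--TIS (continuity of paths and a.s.\ finiteness of the supremum), both of which are standard for the Brownian sheet on a compact rectangle.
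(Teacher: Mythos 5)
Your proof is correct and follows essentially the same route as the paper: reduce to a fixed Brownian sheet on $[p,2p]^d$, bound the mean of the supremum by a constant depending only on $p,d$, then apply Borell's inequality (Borell--TIS) with the variance bound \eqref{varBS}. The one place you are slightly less explicit is the finiteness of $\mathbb{E}[M]$: the paper invokes Fernique's Majorizing Criterion together with \eqref{l2distBS} to produce a quantitative bound via the metric entropy of the canonical pseudometric, whereas you rely on sample-path continuity plus compactness of $[p,2p]^d$ (which gives a.s.\ finiteness of $M$) together with the integrability half of the Borell--TIS/Fernique theorem to conclude $\mathbb{E}[M]<\infty$. That is a legitimate shortcut, though it leans on the less elementary part of the Borell--TIS theorem; the paper's entropy computation makes the dependence of $\mathbb{E}[M]$ on $p$ and $d$ fully explicit. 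Either way the constants $c,C$ end up depending only on $p$ and $d$, as required.
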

\begin{proof}
Let $v\in V_{\epsilon}$. Fernique's Majorizing Criterion (see \cite[Theorem 4.1]{ce})
implies that

\[
\mathbb{E}\left[\sup_{x\in\square_{\epsilon}^{v}}\psi_{\epsilon}^{x}\right]\leq C\sup_{x\in\square_{\epsilon}^{v}}\int_{0}^{\infty}\sqrt{-\log\left(\mu(B(x,r))\right)}dr
\]
for some absolute constant $C$, where $\mu$ is the normalized $d$-dimensional
Lebesgue measure on $\square_{\epsilon}^{v}$ and $B(x,r)=\left\{ y\in\square_{\epsilon}^{v}:\mathbb{E}\left[\left(\psi_{\epsilon}^{x}-\psi_{\epsilon}^{y}\right)^{2}\right]\leq r^{2}\right\} $.
But \eqref{l2distBS} implies 
\[
B(x,r)\supset\left\{ y\in\square_{\epsilon}^{v}:(2p)^{d}\epsilon^{-1}\left\Vert y-x\right\Vert _{1}\leq r^{2}\right\} .
\]
Therefore, $\mu\left(B(x,r)\right)\geq cr^{2d}$ for some constant
$c>0$ depending on $p$ and $d$. Applying the previous display and
Fernique's Majorizing Criterion, we obtain

\[
\mathbb{E}\left[\sup_{x\in\square_{\epsilon}^{v}}\psi_{\epsilon}^{x}\right]\leq C\int_{0}^{\infty}\sqrt{-\log\left(cr^{2d}\right)}dr\leq C<\infty,
\]
where $C$ depends on $p$ and $d$. Borell's Inequality (see \cite[Theorem 2.1.1]{rf})
and \eqref{varBS} imply

\[
\mathbb{P}\left(\sup_{x\in\square_{\epsilon}^{v}}\psi_{\epsilon}^{x}\geq C+\lambda\right)\leq e^{-\lambda^{2}/(2(2p)^{d})},
\]
where $C$ is the constant obtained in the previous display. Lemma
\ref{rtubBS} now follows from a change of variables.\end{proof}
\begin{prop}
\label{rtcomparison2} Let $p$ be as in Proposition \ref{rtcomparison}.
There exist constants $0<c,C<\infty$ (depending on $p$ and $d$)
such that

\[
\mathbb{P}\left(\sup_{x\in[0,1)^{d}}a(x)\xi_{\epsilon}^{[x]}+\psi_{\epsilon}^{x}\geq\lambda+m_{\epsilon}\right)\leq Ce^{-c\lambda}
\]
for all $\lambda\geq0$ and all $\epsilon>0$ small enough.\end{prop}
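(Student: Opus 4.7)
The plan is to reduce the continuous supremum to a discrete maximum over $V_\epsilon$, and then decompose according to the magnitude of $M_v := \sup_{x \in \square_\epsilon^v} \psi_\epsilon^x$, so that \eqref{rtubMBRW} can be applied conditionally to a random, relatively thin, subset of $V_\epsilon$.

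First I would observe that, since $a(x)\in[0,1]$, the pointwise bound
\begin{equation*}
\sup_{x\in\square_\epsilon^v}\bigl[a(x)\xi_\epsilon^v+\psi_\epsilon^x\bigr]\leq (\xi_\epsilon^v)^++M_v
\end{equation*}
holds for every $v\in V_\epsilon$. Taking $\max$ over $v$ and writing $(\xi_\epsilon^v)^++M_v=\max(\xi_\epsilon^v+M_v,M_v)$ reduces the problem to bounding $\mathbb{P}(\max_v[\xi_\epsilon^v+M_v]\geq m_\epsilon+\lambda)$, plus the residual term $\mathbb{P}(\max_v M_v\geq m_\epsilon+\lambda)$, which a union bound over $V_\epsilon$ combined with Lemma \ref{rtubBS} controls by $\epsilon^{-d}Ce^{-cm_\epsilon^2}$, negligible for $\epsilon$ small.

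Next, for each $k\in\mathbb{Z}$, I would set $\tilde V_k:=\{v\in V_\epsilon:M_v\in[k,k+1)\}$, so that the event $\{\max_v[\xi_\epsilon^v+M_v]\geq m_\epsilon+\lambda\}$ is contained in $\bigcup_k\{\max_{v\in\tilde V_k}\xi_\epsilon^v\geq m_\epsilon+(\lambda-k-1)\}$. Since $\tilde V_k$ is determined by $\psi_\epsilon$ and is independent of $\xi_\epsilon$, conditioning on $\psi_\epsilon$ and applying \eqref{rtubMBRW} yields
\begin{equation*}
\mathbb{P}\bigl(\max_{v\in\tilde V_k}\xi_\epsilon^v\geq m_\epsilon+(\lambda-k-1)\mid\psi_\epsilon\bigr)\leq C(\epsilon^d|\tilde V_k|)^{1/2}e^{-c(\lambda-k-1)}.
\end{equation*}
Jensen's inequality together with the estimate $\mathbb{P}(M_1\in[k,k+1))\leq Ce^{-ck^2}$ (obtained for $k\geq 0$ from Lemma \ref{rtubBS} and for $k\leq-1$ from the Gaussian lower tail of $\psi_\epsilon^v$, using $M_v\geq\psi_\epsilon^v$) gives $\mathbb{E}[(\epsilon^d|\tilde V_k|)^{1/2}]\leq Ce^{-ck^2/2}$ for every $k\in\mathbb{Z}$. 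Summing over $k$,
\begin{equation*}
\sum_{k\in\mathbb{Z}}Ce^{-ck^2/2}e^{-c(\lambda-k-1)}\leq Ce^{-c\lambda}\sum_{k\in\mathbb{Z}}e^{ck-ck^2/2}\leq Ce^{-c\lambda},
\end{equation*}
the last sum being a finite Gaussian--geometric convolution.

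The main obstacle to watch is the trade-off between the Brownian sheet level $k$ and the residual MBRW level $\lambda-k$: a crude split such as $\mathbb{P}(\max_v\xi_\epsilon^v\geq m_\epsilon+\lambda/2)+\mathbb{P}(\max_v M_v\geq\lambda/2)$ fails because $\max_v M_v$ is typically of order $\sqrt{\log(1/\epsilon)}$, and the Gaussian bound $e^{-c\lambda^2/4}$ for the sheet does not compensate the $\epsilon^{-d}$ factor produced by the union bound over $V_\epsilon$ when $\lambda$ is of constant order. The decomposition along integer levels of $M_v$ exposes the fact that $\mathbb{E}[|\tilde V_k|]$ decays like $\epsilon^{-d}e^{-ck^2}$ in $k$, which is exactly the decay needed to cancel the $\epsilon^{-d}$ factor inside the square root in \eqref{rtubMBRW} and produce a summable exponential bound.
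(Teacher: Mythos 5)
Your proof is correct and follows essentially the same approach as the paper's: partition $V_\epsilon$ by the integer level of $M_v=\psi_\epsilon^{\ast,v}$, condition on $\psi_\epsilon$, apply \eqref{rtubMBRW} together with Jensen and the Gaussian tail bound from Lemma \ref{rtubBS}, and sum; replacing the paper's use of $1/2\leq a(x)\leq 1$ with $a(x)\xi_\epsilon^v\leq(\xi_\epsilon^v)^+$ and summing over all $k\in\mathbb{Z}$ (rather than lumping $k\leq 0$ into a single set as the paper does with $\Gamma_0$) are cosmetic variants of the same argument. One minor caution: for the residual term $\mathbb{P}(\max_v M_v\geq m_\epsilon+\lambda)$, keep the $\lambda$ in the exponent — the bound $\epsilon^{-d}Ce^{-c(m_\epsilon+\lambda)^2}\leq Ce^{-c\lambda}$ for $\epsilon$ small, whereas the quantity $\epsilon^{-d}Ce^{-cm_\epsilon^2}$ you wrote is a fixed constant and on its own does not give the required decay in $\lambda$.
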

\begin{proof}
By letting $\psi_{\epsilon}^{\ast,[x]}=\sup_{y\in\square_{\epsilon}^{[x]}}\psi_{\epsilon}^{y}$,
we have
\[
\sup_{x\in[0,1)^{d}}a(x)\xi_{\epsilon}^{[x]}+\psi_{\epsilon}^{x}\leq\max_{x\in[0,1)^{d}}a(x)\xi_{\epsilon}^{[x]}+\psi_{\epsilon}^{\ast,[x]}.
\]
The previous display implies

\[
\sup_{x\in[0,1)^{d}}a(x)\xi_{\epsilon}^{[x]}+\psi_{\epsilon}^{x}\geq m_{\epsilon}+\lambda\implies\sup_{x\in[0,1)^{d}}a(x)\xi_{\epsilon}^{[x]}+\psi_{\epsilon}^{\ast,[x]}\geq m_{\epsilon}+\lambda.
\]

We now compute an upper bound for the right hand side of the previous
display. Define the random sets $\Gamma_{y}=\left\{ v\in V_{\epsilon}:\psi_{\epsilon}^{\ast,v}\in[y-1,y)\right\} $
for $y\geq1$, and $\Gamma_{0}=\left\{ v\in V_{\epsilon}:\psi_{\epsilon}^{\ast,v}\leq0\right\} $.
Note that
\[
\mathbb{P}\left(\sup_{x\in[0,1)^{d}}a(x)\xi_{\epsilon}^{[x]}+\psi_{\epsilon}^{\ast,[x]}\geq m_{\epsilon}+\lambda\right)\leq\sum_{y\geq0}\mathbb{P}\left(\sup_{x:[x]\in\Gamma_{y}}a(x)\xi_{\epsilon}^{[x]}\geq m_{\epsilon}+\lambda-y\right).
\]
The definition of $a(x)$ easily implies that $1/2\leq a(x)\leq1$,
for $\epsilon>0$ small enough. Therefore, the last display implies
\begin{equation}
\mathbb{P}\left(\sup_{x\in[0,1)^{d}}a(x)\xi_{\epsilon}^{[x]}+\psi_{\epsilon}^{\ast,[x]}\geq m_{\epsilon}+\lambda\right)\leq\sum_{y\geq0}\mathbb{P}\left(\max_{v\in\Gamma_{y}}\xi_{\epsilon}^{v}\geq m_{\epsilon}+\lambda-2y\right).\label{randomsets}
\end{equation}
But $\mathbb{P}\left(\max_{v\in\Gamma_{y}}\xi_{\epsilon}^{v}\geq m_{\epsilon}+\lambda-2y\right)=\mathbb{E}\left[\mathbb{P}\left(\max_{v\in\Gamma_{y}}\xi_{\epsilon}^{v}\geq m_{\epsilon}+\lambda-2y\mid\Gamma_{y}\right)\right]$.
Since $\psi_{\epsilon}$ and $\xi_{\epsilon}$ are independent, from
\eqref{rtubMBRW} we obtain, 
\[
\mathbb{P}\left(\max_{v\in\Gamma_{y}}\xi_{\epsilon}^{v}\geq m_{\epsilon}+\lambda-2y\mid\Gamma_{y}\right)\leq C\left(\epsilon^{d}\left|\Gamma_{y}\right|\right)^{1/2}e^{-c(\lambda-2y)}.
\]
Then,
\begin{equation}
\mathbb{P}\left(\max_{v\in\Gamma_{y}}\xi_{\epsilon}^{v}\geq m_{\epsilon}+\lambda-2y\right)\leq Ce^{-c(\lambda-2y)}\left(\mathbb{E}\left[\epsilon^{d}\left|\Gamma_{y}\right|\right]\right)^{1/2}.\label{randomsets2}
\end{equation}
But, by Lemma \ref{rtubBS}, $\mathbb{E}\left[\left|\Gamma_{y}\right|\right]=\sum_{v\in V_{\epsilon}}\mathbb{P}\left(\psi_{\epsilon}^{\ast,v}\in[y-1,y)\right)\leq C\epsilon^{-d}e^{-cy^{2}}$.
For $y=0$, we simply use $\left|\Gamma_{0}\right|\leq\epsilon^{-d}$.
Therefore, from displays \eqref{randomsets} and \eqref{randomsets2},
we obtain
\[
\mathbb{P}\left(\sup_{x\in[0,1)^{d}}a(x)\xi_{\epsilon}^{[x]}+\psi_{\epsilon}^{\ast,[x]}\geq m_{\epsilon}+\lambda\right)\leq Ce^{-c\lambda}
\]
for some constants $0<c,C<\infty$ (depending on $p$ and $d$). 
\end{proof}

\begin{proof}
[Proof of Theorem \ref{main}, \eqref{rtubY}, the right tail] Display
\eqref{union} and Proposition \ref{rtcomparison2} imply

\[
\mathbb{P}\left(\max_{x\in[0,1)^{d}}Y_{\delta\epsilon}^{x}\geq m_{\epsilon}+\lambda\right)\leq\left(\frac{1}{\delta}\right)^{2}\mathbb{P}\left(\max_{x\in[0,1)^{d}}a(x)\xi_{\epsilon}^{[x]}+\psi_{\epsilon}^{x}\geq\lambda+m_{\epsilon}\right)\leq Ce^{-c\lambda}.
\]
It is easy to see from the definition that $m_{\delta\epsilon}\leq m_{\epsilon}+C'$
for some $C'$ depending on $\delta$ and $d$. Therefore,

\[
\mathbb{P}\left(\max_{x\in[0,1)^{d}}Y_{\delta\epsilon}^{x}\geq m_{\delta\epsilon}+\lambda-C'\right)\leq Ce^{-c\lambda}.
\]
The upper bound \eqref{rtubY} for the right tail follows by adjusting
the constants.
\end{proof}

\subsection{The left tail}

In this subsection we prove the upper bound \eqref{rtubY} for the
left tail. As previously mentioned, we can reduce the set under maximization
to a discrete set. More precisely, if $\left\{ D_{\epsilon}:\epsilon>0\right\} $
is any collection of subsets of $[0,1)^{d}$, then

\begin{equation}
\mathbb{P}\left(\sup_{x\in[0,1)^{d}}Y_{\epsilon}^{x}\leq m_{\epsilon}-\lambda\right)\leq\mathbb{P}\left(\sup_{x\in D_{\epsilon}}Y_{\epsilon}^{x}\leq m_{\epsilon}-\lambda\right).\label{discrete}
\end{equation}
If we select $D_{\epsilon}$ appropriately, we can perform a comparison
with the MBRW using Slepian's Lemma.
\begin{prop}
\label{ltcomparison} There exist $\delta,\rho>0$ small enough (depending
on $C_{Y}$ and $d$) such that

\[
\mathbb{P}\left(\max_{u\in V_{\epsilon/\rho}}Y_{\delta\epsilon}^{u}\leq\lambda\right)\leq\mathbb{P}\left(\max_{u\in V_{\epsilon}\cap\rho[0,1)^{d}}b(u)\xi_{\epsilon}^{u}\leq\lambda\right)
\]
for all $\epsilon>0$ and all $\lambda\in\mathbb{R}$, where $b(u):=\sqrt{Var(Y_{\delta\epsilon}^{u})/Var(\xi_{\epsilon}^{u})}$,
for $u\in V_{\epsilon/\rho}$.\end{prop}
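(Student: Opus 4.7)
The plan is to apply Slepian's Lemma to the centered Gaussian vectors $\bigl(Y_{\delta\epsilon}^{u}\bigr)_{u\in V_{\epsilon/\rho}}$ and $\bigl(b(u)\xi_{\epsilon}^{\rho u}\bigr)_{u\in V_{\epsilon/\rho}}$, regarded as indexed by the same finite set via the bijection $u\mapsto\rho u$ between $V_{\epsilon/\rho}$ and $V_{\epsilon}\cap\rho[0,1)^{d}$. The variances match by the very definition of $b(u)$, so by Slepian's Lemma (\cite[Theorem 2.2.1]{rf}) the proposition reduces to the covariance domination
\[
b(u)b(v)\,Cov\bigl(\xi_{\epsilon}^{\rho u},\xi_{\epsilon}^{\rho v}\bigr)\;\geq\;Cov\bigl(Y_{\delta\epsilon}^{u},Y_{\delta\epsilon}^{v}\bigr)\qquad\text{for all distinct }u,v\in V_{\epsilon/\rho}.
\]

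As a preparatory step I would check that $b(u)\geq1$. Applying \eqref{logcorrY} at $x=y$ gives $Var(Y_{\delta\epsilon}^{u})\geq\log(1/\delta)+\log(1/\epsilon)-C_{Y}$, while \eqref{varMBRW} gives $Var(\xi_{\epsilon}^{\rho u})=\log(1/\epsilon)$, so taking $\delta$ small enough that $\log(1/\delta)\geq C_{Y}$ makes $b(u)\geq1$ uniformly, and in particular $b(u)b(v)\geq1$.

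For the covariance inequality, distinct $u,v\in V_{\epsilon/\rho}$ satisfy $\|\rho u-\rho v\|_{\infty}\geq\epsilon$, so \eqref{covMBRW} applies and gives
\[
Cov\bigl(\xi_{\epsilon}^{\rho u},\xi_{\epsilon}^{\rho v}\bigr)\;\geq\;-\log\rho-\log\|u-v\|_{\infty}-C.
\]
On the other side, assuming $\rho\leq1/\delta$ we have $\|u-v\|\geq\epsilon/\rho\geq\delta\epsilon$, so \eqref{logcorrY} together with $\|\cdot\|\geq\|\cdot\|_{\infty}$ yields
\[
Cov\bigl(Y_{\delta\epsilon}^{u},Y_{\delta\epsilon}^{v}\bigr)\;\leq\;-\log\|u-v\|_{\infty}+C_{Y}.
\]
If $\rho$ is chosen small enough that $-\log\rho\geq C+C_{Y}$, the right-hand side of the MBRW bound is nonnegative and already dominates the $Y$ bound; multiplying by $b(u)b(v)\geq1$ preserves the domination and delivers the required inequality.

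The main obstacle is really only bookkeeping: one must pick $\delta$ and $\rho$ so that simultaneously $b(u)\geq1$, $-\log\rho$ absorbs the additive errors $C$ and $C_{Y}$, and $\rho\leq1/\delta$ so that the minimal grid spacing $\epsilon/\rho$ on $V_{\epsilon/\rho}$ stays above the mollification scale $\delta\epsilon$. These three conditions depend only on $C_{Y}$ and $d$ and are manifestly compatible, which is what allows Slepian's Lemma to close the argument without any finer analysis of the fields.
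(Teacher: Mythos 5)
Your proof is correct and takes essentially the same approach as the paper: apply Slepian's Lemma after matching variances with the scaling $b(u)$, mapping $V_{\epsilon/\rho}$ onto $V_{\epsilon}\cap\rho[0,1)^{d}$ via $u\mapsto\rho u$, and choosing $\delta,\rho$ so that $b\geq1$ and $-\log\rho$ absorbs the additive constants $C+C_{Y}$. You are slightly more explicit than the paper in spelling out why multiplication by $b(u)b(v)\geq1$ preserves the covariance domination (you note the MBRW-covariance lower bound is nonnegative under your choice of $\rho$; the paper leaves this implicit, relying on the fact that $Cov(\xi_{\epsilon}^{\cdot},\xi_{\epsilon}^{\cdot})\geq0$ by the integral formula \eqref{defMBRW}), but the logic and the choice of parameters are the same.
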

\begin{proof}
Note that \eqref{logcorrY} and \eqref{covMBRW} imply that $b(u)\geq\frac{\log(1/\epsilon)+\log(1/\delta)-C_{Y}}{\log(1/\epsilon)}$,
which is greater than 1 for $\delta>0$ small enough (depending on
$C_{Y}$).

Let $u,v\in V_{\epsilon/\rho}$, with $u\neq v$. Then, $\left\Vert u-v\right\Vert \geq\epsilon/\rho\geq\delta\epsilon$.
Display \eqref{logcorrY} therefore implies

\[
Cov(Y_{\delta\epsilon}^{u},Y_{\delta\epsilon}^{v})\leq-\log\left\Vert u-v\right\Vert +C_{Y}.
\]
Choose $\rho>0$ small enough so that

\[
-\log\left\Vert u-v\right\Vert +C_{Y}\leq-\log\left\Vert \rho u-\rho v\right\Vert -C\leq Cov(\xi_{\epsilon}^{\rho u},\xi_{\epsilon}^{\rho v}),
\]
where the last bound follows from \eqref{covMBRW}. All the hypotheses
of Slepian's Lemma are satisfied, so 

\[
\mathbb{P}\left(\max_{u\in V_{\epsilon/\rho}}Y_{\delta\epsilon}^{u}\leq\lambda\right)\leq\mathbb{P}\left(\max_{u\in V_{\epsilon/\rho}}b(u)\xi_{\epsilon}^{\rho u}\leq\lambda\right)
\]
for all $\lambda\in\mathbb{R}$. Proposition \ref{ltcomparison} follows
by observing that $\rho V_{\epsilon/\rho}=V_{\epsilon}\cap\rho[0,1]^{2}$.\end{proof}
\begin{prop}
\label{ltcomparison2} Let $\rho>0$ and $\left\{ b(u):u\in V_{\epsilon}\cap\rho[0,1]^{2}\right\} $
be as in Proposition \ref{ltcomparison}. Then,

\[
\mathbb{P}\left(\max_{u\in V_{\epsilon}\cap\rho[0,1]^{2}}b(u)\xi_{\epsilon}^{u}\leq m_{\epsilon}-\lambda\right)\leq\mathbb{P}\left(\max_{u\in V_{\epsilon}\cap\rho[0,1]^{2}}\xi_{\epsilon}^{u}\leq m_{\epsilon}-\lambda/2\right)
\]
for all $\lambda\geq0$ and all $\epsilon>0$ small enough.\end{prop}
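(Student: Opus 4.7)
The plan is to show that the event $\{\max_u b(u)\xi_\epsilon^u \leq m_\epsilon - \lambda\}$ is contained \emph{deterministically} in the event $\{\max_u \xi_\epsilon^u \leq m_\epsilon - \lambda/2\}$, from which the stated probability inequality is immediate. Since $\xi_\epsilon$ itself appears on both sides, no Slepian-type comparison is needed; the argument should reduce to controlling the pointwise multiplicative distortion $b(u)$.

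First I would establish two-sided bounds on $b(u)$ valid for $\epsilon$ small. The lower bound $b(u)\geq 1$ was already noted in the proof of Proposition \ref{ltcomparison} once $\delta$ is chosen small (depending on $C_Y$). For the upper bound, combining \eqref{logcorrY} with \eqref{varMBRW} gives
\[
b(u)^{2}=\frac{\mathrm{Var}(Y_{\delta\epsilon}^{u})}{\log(1/\epsilon)}\leq 1+\frac{\log(1/\delta)+C_{Y}}{\log(1/\epsilon)},
\]
which is at most $2$ once $\epsilon$ is small enough. I would also note that $m_\epsilon\geq 0$ for $\epsilon$ small, a fact that will be used below.

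The key deterministic step is the elementary inequality
\[
\frac{m_{\epsilon}-\lambda}{b}\leq m_{\epsilon}-\frac{\lambda}{2}\quad\text{whenever }1\leq b\leq 2,\ m_{\epsilon}\geq 0,\ \lambda\geq 0.
\]
Multiplying through by $b>0$ and rearranging, this is equivalent to $\lambda(b-2)/2\leq m_{\epsilon}(b-1)$, which is obvious: the left side is nonpositive and the right side is nonnegative under the assumed ranges. Notice that without the upper bound $b\leq 2$ this could fail for very large $\lambda$, so the upper estimate on $b(u)$ is genuinely used.

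Combining these ingredients: on the event $\{\max_u b(u)\xi_\epsilon^u \leq m_\epsilon - \lambda\}$, each $u$ satisfies $\xi_\epsilon^u \leq (m_\epsilon - \lambda)/b(u)$, and then the inequality above yields $\xi_\epsilon^u \leq m_\epsilon - \lambda/2$. Taking the maximum gives the desired event inclusion, and monotonicity of probability finishes the proof. There is no real obstacle; the only point requiring care is enforcing $b(u)\in[1,2]$ for $\epsilon$ small enough, which is exactly the role of the hypotheses on $Y_\epsilon$ together with the variance formula for $\xi_\epsilon$.
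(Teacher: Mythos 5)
Your proof is correct and takes essentially the same route as the paper's: establish $1\leq b(u)\leq 2$ for small $\epsilon$ and then use the elementary fact that $(m_{\epsilon}-\lambda)/b\leq m_{\epsilon}-\lambda/2$ for $b\in[1,2]$, $m_{\epsilon}\geq 0$, $\lambda\geq 0$ to get a deterministic event inclusion. The only cosmetic difference is that the paper applies this inequality only at the maximizer $\nu$ of $\xi_{\epsilon}^{u}$, whereas you apply it pointwise to every $u$; both yield the same containment.
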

\begin{proof}
It follows from the definition of $b(u)$ that, for small enough $\epsilon>0$,

\[
1\leq b(u)\leq2
\]
for all $u$. Let $\nu$ be the (a.s. well-defined) point that maximizes
$\xi_{\epsilon}^{u}$, for $u\in V_{\epsilon}\cap\rho[0,1]^{2}$.
Then,

\[
b(\nu)\xi_{\epsilon}^{\nu}\leq m_{\epsilon}-\lambda\implies\xi_{\epsilon}^{\nu}\leq m_{\epsilon}/b(\nu)-\lambda/b(\nu)\leq m_{\epsilon}-\lambda/2.
\]

\end{proof}
Our task is now to find an upper bound for the probability on the
right hand side of Proposition \ref{ltcomparison2}.
\begin{prop}
\label{ltcompare3} There exist constants $0<c,C<\infty$ (depending
on $\rho$ and $d$) such that

\[
\mathbb{P}\left(\max_{v\in V_{\epsilon}\cap\rho[0,1]^{2}}\xi_{\epsilon}^{v}\leq m_{\epsilon}-\lambda\right)\leq Ce^{-c\lambda}
\]
for all $\lambda\geq0$ and all $\epsilon>0$ small enough.\end{prop}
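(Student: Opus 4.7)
The plan is to exploit the Brownian-motion-in-time structure encoded in \eqref{defMBRW}: for each $v$, $(\xi_{\epsilon}^{v}(t))_{t}$ is a standard Brownian motion, and for $\|v-u\|_{\infty}\geq\eta$ the increments of $\xi_{\epsilon}^{v}$ and $\xi_{\epsilon}^{u}$ past time $\log(1/\eta)$ become independent. I will use this to manufacture many nearly-independent opportunities for the maximum to be large, and then combine them via \eqref{rtlbMBRW}.

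Fix a small $\alpha\in(0,1/(2\sqrt{2d}))$ (the concrete value will be pinned by the optimization at the end), set $\eta:=e^{-\alpha\lambda}$, and partition $\rho[0,1)^{d}$ into $K\geq c\eta^{-d}$ pairwise disjoint sub-boxes $Q_{1},\ldots,Q_{K}$ of side $\eta$, positioned on a grid of spacing $2\eta$ so that any two distinct boxes sit at $\ell_{\infty}$-distance $\geq\eta$. Edge cases ($\lambda$ bounded, or $\eta\notin(\epsilon,\rho/4)$) are absorbed by enlarging $C$ or by a single-site Gaussian tail on $\xi_{\epsilon}^{v_{0}}$; so assume $\epsilon\ll\eta\ll\rho$. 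Decompose $\xi_{\epsilon}^{v}=A^{v}+B^{v}$ with $A^{v}:=\xi_{\epsilon}^{v}(\log(1/\eta))$ and $B^{v}:=\xi_{\epsilon}^{v}(\log(1/\epsilon))-\xi_{\epsilon}^{v}(\log(1/\eta))$. A direct computation from \eqref{defMBRW} shows that (i) $A$ and $B$ are independent as Gaussian fields; (ii) $(B^{v})_{v\in Q_{k}}$ and $(B^{v})_{v\in Q_{k'}}$ are independent for $k\neq k'$ (because $\|v-u\|_{\infty}\geq\eta$ forces $\prod_{i}(1-e^{r}|v_{i}-u_{i}|)_{+}=0$ on $r\geq\log(1/\eta)$); and (iii) after the substitution $r\mapsto r-\log(1/\eta)$, $(B^{v})_{v\in Q_{k}}$ has the law of the MBRW $\xi_{\epsilon/\eta}$ on a translated and rescaled copy of $V_{\epsilon/\eta}$. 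Hence \eqref{rtlbMBRW} yields $\mathbb{P}(\max_{v\in Q_{k}}B^{v}\geq m_{\epsilon/\eta})\geq c$ with $c$ depending only on $d$.

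Let $v_{k}^{*}\in Q_{k}$ denote the ($B$-measurable) argmax of $B$ on $Q_{k}$; on $\{\max_{Q_{k}}B^{v}\geq m_{\epsilon/\eta}\}$ one has $\max_{Q_{k}}\xi_{\epsilon}^{v}\geq m_{\epsilon/\eta}+A^{v_{k}^{*}}$. Using the definition of $m_{\epsilon}$ and monotonicity of $\log\log$, $m_{\epsilon}-m_{\epsilon/\eta}\leq\sqrt{2d}\log(1/\eta)=\sqrt{2d}\alpha\lambda$, so the event $\{\max_{v}\xi_{\epsilon}^{v}\leq m_{\epsilon}-\lambda\}$ forces, for each $k$, either $\mathcal{E}_{k}^{B}:=\{\max_{Q_{k}}B^{v}<m_{\epsilon/\eta}\}$ or $\mathcal{E}_{k}^{A}:=\{A^{v_{k}^{*}}\leq-(1-\sqrt{2d}\alpha)\lambda\}$. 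The $\mathcal{E}_{k}^{B}$ are independent, each of probability $\leq1-c$, so $\mathbb{P}(\bigcap_{k}\mathcal{E}_{k}^{B})\leq(1-c)^{K}\leq e^{-cK}$, super-exponentially small in $\lambda$. Since $v_{k}^{*}$ is $\sigma(B)$-measurable and $A\perp B$, $A^{v_{k}^{*}}$ is marginally $\mathcal{N}(0,\alpha\lambda)$; a union bound and the Gaussian tail give $\mathbb{P}(\bigcup_{k}\mathcal{E}_{k}^{A})\leq K\exp(-(1-\sqrt{2d}\alpha)^{2}\lambda/(2\alpha))=\exp([d\alpha-(1-\sqrt{2d}\alpha)^{2}/(2\alpha)]\lambda)$, whose exponent is strictly negative for any $\alpha<1/(2\sqrt{2d})$. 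Summing these two bounds produces $Ce^{-c\lambda}$.

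The main obstacle is that the lower bound on $\max_{Q_{k}}\xi_{\epsilon}$ involves $A$ evaluated at the $B$-dependent site $v_{k}^{*}$; the independence $A\perp B$ rescues the argument by reducing this to a marginal Gaussian tail, and then the whole scheme hinges on balancing the two competing exponents $d\alpha$ (from the union bound over the $K$ sub-boxes) against $(1-\sqrt{2d}\alpha)^{2}/(2\alpha)$ (from the Gaussian tail on $A^{v_{k}^{*}}$), which is precisely why $\alpha$ must lie below the critical value $1/(2\sqrt{2d})$.
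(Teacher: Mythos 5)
Your proof is correct and takes a genuinely different route from the paper's, although both hinge on the same scale-invariance decomposition $\xi_{\epsilon}^{v}=\xi_{\epsilon}^{v}(k)+(\xi_{\epsilon}^{v}-\xi_{\epsilon}^{v}(k))$ and the lower bound \eqref{rtlbMBRW}. The paper chooses the cut $k=\log\lambda$ (logarithmic in $\lambda$) so that only $ce^{k}=c\lambda$ independent sub-boxes are needed, and it then sidesteps any union bound by looking at the \emph{overall} argmax $\nu$ of the late increments across all boxes: since $\nu$ is independent of the coarse field $\xi_{\epsilon}^{\cdot}(k)$, the single Gaussian variable $\xi_{\epsilon}^{\nu}(k)\sim\mathcal{N}(0,\log\lambda)$ yields a tail of order $e^{-c\lambda^{2}/\log\lambda}$, making the $e^{-c\lambda}$ from the independent late-increment maxima the dominant term. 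You instead take $k=\alpha\lambda$ (linear in $\lambda$), which produces $K\asymp e^{d\alpha\lambda}$ sub-boxes, and you union-bound over all of them; the Gaussian tail on $A^{v_{k}^{*}}\sim\mathcal{N}(0,\alpha\lambda)$ at level $-(1-\sqrt{2d}\alpha)\lambda$ gives $e^{-(1-\sqrt{2d}\alpha)^{2}\lambda/(2\alpha)}$, and the combined exponent simplifies (after expanding) to $\sqrt{2d}-\tfrac{1}{2\alpha}$, which is strictly negative precisely for $\alpha<1/(2\sqrt{2d})$. So the paper's argmax trick buys a much smaller second term and a cleaner constant, while your union bound is more brute-force but still lands comfortably in the exponential regime. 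One small wording caveat: your sub-boxes do not literally \emph{partition} $\rho[0,1)^{d}$ because of the $2\eta$ spacing — they form a sparse collection, exactly as in the paper — and, as in the paper, one should implicitly round $\eta$ so that the box corners lie on the lattice $V_{\epsilon}$ in order for the identification of $(B^{v})_{v\in Q_{k}}$ with $\xi_{\epsilon/\eta}$ to be exact; neither point affects the estimates. Your handling of the regime $\eta<\epsilon$ via the single-site Gaussian tail matches the paper's treatment of $\lambda\geq\sqrt{1/\epsilon}$.
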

\begin{proof}
Assume $0\leq k\leq\log\left(1/\epsilon\right)/2$, where $k$ is
a large number, that will be chosen later. Let $\left\{ B^{i}:i=1,\ldots,ce^{k}\right\} $
(where $c>0$ is a small constant, depending on $\rho$) be a collection
of boxes of side length $e^{-k}$ inside $\rho[0,1)^{d}$, such that
the distance between any pair of boxes is at least $e^{-k}$. Set
$B_{\epsilon}^{i}=B^{i}\cap V_{\epsilon}$. We claim that the field
\[
\left(\xi_{\epsilon}^{v}-\xi_{\epsilon}^{v}(k):v\in B_{\epsilon}^{i}\right)
\]
is a copy of $\left(\xi_{\epsilon e^{k}}^{v}:v\in V_{\epsilon e^{k}}\right)$,
and that the fields $\left\{ \left(\xi_{\epsilon}^{v}-\xi_{\epsilon}^{v}(k):v\in B_{\epsilon}^{i}\right)\right\} _{i\leq ce^{k}}$
are independent. Indeed, if $v,u\in B_{\epsilon}^{i}$, then \eqref{defMBRW}
implies
\begin{equation}
Cov(\xi_{\epsilon}^{v}-\xi_{\epsilon}^{v}(k),\xi_{\epsilon}^{u}-\xi_{\epsilon}^{u}(k))=\int_{k}^{\log(1/\epsilon)}\prod_{j\leq d}\left(1-e^{r}\left|v_{j}-u_{j}\right|\right)_{+}dr\label{copies}
\end{equation}
\[
=\int_{0}^{-\log(\epsilon e^{k})}\prod_{j\leq d}\left(1-e^{r}\left|e^{k}v_{j}-e^{k}u_{j}\right|\right)_{+}dr,
\]
and the set $e^{k}B_{\epsilon}^{i}=\left\{ e^{k}v:v\in B_{\epsilon}^{i}\right\} $
coincides with $V_{\epsilon e^{k}}$ after a translation. This shows
that $\left(\xi_{\epsilon}^{v}-\xi_{\epsilon}^{v}(k):v\in B_{\epsilon}^{i}\right)\overset{d}{=}\left(\xi_{\epsilon e^{k}}^{v}:v\in V_{\epsilon e^{k}}\right)$.
Moreover, from \eqref{copies}, it is easy to see that $\left\Vert v-u\right\Vert \geq e^{-k}$
(which is true for points $v,u$ in different boxes $B_{\epsilon}^{i}$,
by construction) implies
\[
Cov(\xi_{\epsilon}^{v}-\xi_{\epsilon}^{v}(k),\xi_{\epsilon}^{u}-\xi_{\epsilon}^{u}(k))=0,
\]
as desired. 

Therefore, independence of the fields $\left\{ \left(\xi_{\epsilon}^{v}-\xi_{\epsilon}^{v}(k):v\in B_{\epsilon}^{i}\right)\right\} _{i\leq ce^{k}}$
and \eqref{rtlbMBRW} imply
\[
\mathbb{P}\left(\max_{v\in\bigcup_{i}B_{\epsilon}^{i}}\left(\xi_{\epsilon}^{v}-\xi_{\epsilon}^{v}(k)\right)\leq m_{\epsilon e^{k}}\right)\leq e^{-ce^{k}}
\]
for some constant $c>0$ depending on $d$ and $\rho$. By letting
$\nu=\arg\max\left\{ \xi_{\epsilon}^{v}-\xi_{\epsilon}^{v}(k):v\in\bigcup_{i}B_{\epsilon}^{i}\right\} $,
the previous display implies 
\[
\mathbb{P}\left(\max_{v\in V_{\epsilon}\cap\rho[0,1)^{d}}\xi_{\epsilon}^{v}\leq m_{\epsilon}-\lambda\right)\leq\mathbb{P}\left(\xi_{\epsilon}^{\nu}\leq m_{\epsilon}-\lambda\right)\leq\mathbb{P}\left(\xi_{\epsilon}^{\nu}(k)\leq m_{\epsilon}-m_{\epsilon e^{k}}-\lambda\right)+\mathbb{P}\left(\xi_{\epsilon}^{\nu}-\xi_{\epsilon}^{\nu}(k)\leq m_{\epsilon e^{k}}\right)
\]
\[
\leq\mathbb{P}\left(\xi_{\epsilon}^{\nu}(k)\leq m_{\epsilon}-m_{\epsilon e^{k}}-\lambda\right)+e^{-ce^{k}}.
\]
Moreover, it is clear from \eqref{defMBRW} that the fields $\left(\xi_{\epsilon}^{v}-\xi_{\epsilon}^{v}(k):v\in V_{\epsilon}\right)$
and $\left(\xi_{\epsilon}^{v}(k):v\in V_{\epsilon}\right)$ are independent.
Hence, $\nu$ is independent from $\xi_{\epsilon}^{(\cdot)}(k)$,
and $\xi_{\epsilon}^{\nu}(k)$ is therefore a Gaussian random variable
with mean zero and variance $k$. But
\[
m_{\epsilon}-m_{\epsilon e^{k}}\leq\sqrt{2d}k.
\]
Therefore, by choosing $k=\log\lambda$, the last two displays imply
\[
\mathbb{P}\left(\max_{v\in V_{\epsilon}\cap\rho[0,1)^{d}}\xi_{\epsilon}^{v}\leq m_{\epsilon}-\lambda\right)\leq Ce^{-c\frac{\left(\lambda-\sqrt{2d}\log\lambda\right)^{2}}{\log\lambda}}+e^{-c\lambda}\leq Ce^{-c\lambda},
\]
proving Proposition \ref{ltcompare3} in the case $k=\log\lambda\leq\log(1/\epsilon)/2$. 

On the other hand, for $\lambda\geq\sqrt{1/\epsilon}$,
\[
\mathbb{P}\left(\max_{v\in V_{\epsilon}\cap\rho[0,1)^{d}}\xi_{\epsilon}^{v}\leq m_{\epsilon}-\lambda\right)\leq\mathbb{P}\left(\xi_{\epsilon}^{v}\leq m_{\epsilon}-\lambda\right)\leq Ce^{-c\frac{\left(\lambda-m_{\epsilon}\right)^{2}}{\log\left(1/\epsilon\right)}}\leq Ce^{-c\lambda}
\]
(where $v$ is any point), which implies Proposition \ref{ltcompare3}
in this case.
\end{proof}
Using Propositions \ref{ltcomparison}, \ref{ltcomparison2} and \ref{ltcompare3},
we are now ready to finish the proof of Theorem \ref{main}.
\begin{proof}
[Proof of \ref{main}, \eqref{rtubY}, the left tail] Propositions
\ref{ltcomparison}, \ref{ltcomparison2} and \ref{ltcompare3} imply
the existence of constants $0<\delta,\rho,c,C<\infty$, depending
on $C_{Y}$ and $d$, such that

\[
\mathbb{P}\left(\max_{u\in V_{\epsilon/\rho}}Y_{\delta\epsilon}^{u}\leq m_{\epsilon}-\lambda\right)\leq Ce^{-c\lambda}
\]
for all $\lambda\geq0$. But $m_{\delta\epsilon}\leq m_{\epsilon}+C'$,
where $C'$ depends on $\delta$ and $d$. Therefore,

\[
\mathbb{P}\left(\max_{u\in V_{\epsilon/\rho}}Y_{\delta\epsilon}^{u}\leq m_{\delta\epsilon}-\lambda-C'\right)\leq Ce^{-c\lambda}.
\]
The bound \eqref{rtubY} for the left tail follows by adjusting the
constants.
\end{proof}

\section{Example: a mollified Gaussian free field in $d=2$}

The Gaussian free field in two dimensions provides an important example
of a log-correlated field. Intuitively speaking, the reason for the
log-correlation is simply that, in $d=2$, the Green function for
the Laplacian is logarithmic.

We begin by recalling in Section 3.1 the definitions of the Dirichlet
product and the Hilbert space induced by it. We then use this Hilbert
space to define the continuous Gaussian free field and the mollified
Gaussian free field. After that, we prove some useful properties of
these fields, which will be used to check the hypotheses of Theorem
\ref{main}. Finally, in section 3.2, we use Theorem \ref{main} to
prove tightness of the recentered maximum of the family of mollified
Gaussian free fields.

\subsection{Continuous and mollified Gaussian free fields}

\subsubsection*{Dirichlet product}

We begin by recalling the definition of the Dirichlet product. Let
$C_{c}^{\infty}\left((0,1)^{2}\right)$ denote the set of real valued
$C^{\infty}$ functions with compact support in $(0,1)^{2}$. For
$\phi,\psi\in C_{c}^{\infty}\left((0,1)^{2}\right)$, let

\[
\langle\phi,\psi\rangle_{\nabla}=\int\nabla\phi(x)\nabla\psi(x)dx
\]
denote the Dirichlet product, where $\nabla$ is the gradient and
$dx$ is two-dimensional Lebesgue measure. Note that the Dirichlet
product satisfies

\begin{equation}
\langle\phi,\psi\rangle_{\nabla}=\int\phi(x)(-\Delta\psi)(x)dx,\label{parts}
\end{equation}
where $\Delta$ is the standard Laplacian. The Dirichlet product induces
a norm on $C_{c}^{\infty}\left((0,1)^{2}\right)$ by

\[
\left\Vert \phi\right\Vert _{\nabla}=\sqrt{\langle\phi,\phi\rangle_{\nabla}},
\]
called the Dirichlet norm. Denote by $W=W\left((0,1)^{2}\right)$
the completion of $C_{c}^{\infty}\left((0,1)^{2}\right)$ with respect
to the Dirichlet norm. The set $W$, together with the Dirichlet product
on $W$, defines a Hilbert space. 

The Dirichlet norm satisfies Poincare's Inequality: there exists a
constant $C$ (which depends only on the domain $(0,1)^{2}$) such
that

\[
\left\Vert \phi\right\Vert _{L^{2}}\leq C\left\Vert \nabla\phi\right\Vert _{L^{2}}
\]
for all $\phi\in C_{c}^{\infty}$. Poincare's Inequality implies that
the Dirichlet norm is equivalent to the norm

\[
\left\Vert \phi\right\Vert _{L^{2}}+\left\Vert \frac{\partial}{\partial x_{1}}\phi\right\Vert _{L^{2}}+\left\Vert \frac{\partial}{\partial x_{2}}\phi\right\Vert _{L^{2}}.
\]
Recall that the completion of $C_{c}^{\infty}\left((0,1)^{2}\right)$
with respect to the latter norm is called a $(1,2)$-Sobolev space
(i.e., measurable functions such that their weak derivatives up to
order 1 exist and belong to $L^{2}\left((0,1)^{2}\right))$. Since
the norms are equivalent, the space $\left(W,\left\Vert \cdot\right\Vert _{\nabla}\right)$
is also a Sobolev space. Therefore, for any $g\in W$ and any measurable
set $E\subset[0,1]^{2}$, the integral $\int_{E}g(x)dx$ is well-defined. 

For a given open set $U\subset(0,1)^{2}$, Poincare's Inequality implies
that the linear mapping $W\to\mathbb{R}$ given by

\[
g\mapsto\int_{U}g(x)dx
\]
is $\left\Vert \cdot\right\Vert _{\nabla}$-continuous. Note that,
since $W$ is a Hilbert space, the Riesz representation theorem implies
the existence of a function $f=f_{U}\in W$ such that

\begin{equation}
\langle g,f_{U}\rangle_{\nabla}=\int_{U}g(x)dx\label{riesz}
\end{equation}
for all $g\in W$.

\subsubsection*{Gaussian free fields}

The continuous Gaussian free field is defined as follows: since $\langle\cdot,\cdot\rangle_{\nabla}$
is positive definite, there exists a family $\left\{ X^{f}:f\in W\right\} $
of centered Gaussian variables, defined on some probability space
$\left(\Omega,\mathbb{P}\right)$, such that

\[
Cov(X^{f},X^{g})=\langle f,g\rangle_{\nabla}.
\]
The family $\left\{ X^{f}:f\in W\right\} $ is called the \emph{continuous
Gaussian free field}. 

We next define a field indexed by the set $[0,1]^{2}$. Fix $\epsilon>0$,
and let $x\in[0,1]^{2}$. By \eqref{riesz}, there exists a function
$f_{x,\epsilon}\in W$ such that

\begin{equation}
\langle f_{x,\epsilon},g\rangle_{\nabla}=\frac{1}{\pi\epsilon^{2}}\int_{D(x,\epsilon)\cap(0,1)^{2}}g(u)du\label{def}
\end{equation}
for all $g\in W$, where $D(x,\epsilon)$ is the disk of radius $\epsilon$
centered at $x$. Using \eqref{parts} and \eqref{def}, it is not
hard to show that

\begin{equation}
f_{x,\epsilon}(u)=\frac{1}{\pi\epsilon^{2}}\int_{D(x,\epsilon)\cap(0,1)^{2}}G(u,v)dv,\label{GRRREN}
\end{equation}
where $G=G_{(0,1)^{2}}$ is the Green function of $\left(0,1\right)^{2}$
for the operator $-\Delta$, with Dirichlet boundary conditions on
$\partial\left(0,1\right)^{2}$. For the domain $\left(0,1\right)^{2}$,
the Green function can be explicitly stated as:
\[
G(u,v)=\frac{4}{\pi^{2}}\sum_{n,m\geq1}\frac{1}{n^{2}+m^{2}}\sin\left(n\pi u_{1}\right)\sin\left(m\pi u_{2}\right)\sin\left(n\pi v_{1}\right)\sin\left(m\pi v_{2}\right),
\]
where $u=(u_{1},u_{2})\in[0,1]^{2}$. The field $\left(X^{f_{x,\epsilon}}:x\in[0,1]^{2}\right)$
will be called \emph{$\epsilon$-mollified Gaussian free field} (MGFF).
To simplify notation, set $X_{\epsilon}^{x}=X^{f_{x,\epsilon}}$.
Note that, by definition,

\[
Cov(X_{\epsilon}^{x},X_{\epsilon}^{y})=\langle f_{x,\epsilon},f_{y,\epsilon}\rangle_{\nabla}=\frac{1}{\pi\epsilon^{2}}\int_{D(x,\epsilon)\cap(0,1)^{2}}f_{y,\epsilon}(u)du
\]
and, from \eqref{GRRREN}, we obtain 

\begin{equation}
Cov(X_{\epsilon}^{x},X_{\epsilon}^{y})=\frac{1}{\left(\pi\epsilon^{2}\right)^{2}}\int_{D(x,\epsilon)\cap(0,1)^{2}\times D(y,\epsilon)\cap(0,1)^{2}}G(u,v)\, dudv,\label{green}
\end{equation}
for all $x,y\in[0,1]^{2}$.

\subsubsection*{Orthogonal decomposition}

The next proposition shows that the MGFF satisfies a tree-like decomposition
property.
\begin{prop}
\label{prop3.1} Let $Q=\frac{1}{2}(0,1)^{2}\subset(0,1)^{2}$ be
a sub-square of side length $1/2$. Then, $X_{\epsilon}^{x}$ can
be decomposed as

\[
X_{\epsilon}^{x}=\hat{X}_{\epsilon}^{x}+\phi^{x},
\]
where $\left(\hat{X}_{\epsilon}^{x}:x\in\overline{Q}\right)$ is a
copy of $\left(X_{2\epsilon}^{x}:x\in[0,1]^{2}\right)$, and $\left(\hat{X}_{\epsilon}^{x}:x\in\overline{Q}\right)$
is independent of $\left(\phi^{x}:x\in[0,1]^{2}\right)$.\end{prop}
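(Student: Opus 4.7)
The plan is to realize the decomposition through an orthogonal projection of Riesz representers in the Hilbert space $W$. Let $W(Q) \subset W$ denote the closure of $C_c^\infty(Q)$ in the Dirichlet norm, regarded as a closed subspace of $W$ via extension by zero outside $\overline{Q}$, and let $\pi_Q$ denote the orthogonal projection of $W$ onto $W(Q)$. For $x \in [0,1]^2$, set
\[
\hat{X}_\epsilon^x := X^{\pi_Q f_{x,\epsilon}}, \qquad \phi^x := X^{f_{x,\epsilon} - \pi_Q f_{x,\epsilon}}.
\]
Linearity of $f \mapsto X^f$ gives $X_\epsilon^x = \hat{X}_\epsilon^x + \phi^x$, and the two families are jointly centered Gaussian with $\mathrm{Cov}(\hat X_\epsilon^x, \phi^y) = \langle \pi_Q f_{x,\epsilon}, f_{y,\epsilon} - \pi_Q f_{y,\epsilon}\rangle_\nabla = 0$ by orthogonality of the projection, hence independent.

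Next, for $x \in \overline{Q}$, I identify $\pi_Q f_{x,\epsilon}$ as a Riesz representer inside $W(Q)$. By definition, $\pi_Q f_{x,\epsilon}$ is the unique element of $W(Q)$ satisfying $\langle g, \pi_Q f_{x,\epsilon}\rangle_\nabla = \langle g, f_{x,\epsilon}\rangle_\nabla$ for all $g \in W(Q)$. Using \eqref{def} together with the fact that any $g \in W(Q)$ vanishes outside $\overline{Q}$, the right-hand side equals $\tfrac{1}{\pi\epsilon^2}\int_{D(x,\epsilon)\cap Q} g(u)\,du$. Thus $\pi_Q f_{x,\epsilon}$ is exactly the Riesz representer (in $W(Q)$) of the version of \eqref{def} carried out on the sub-domain $Q$.

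Finally, I invoke a scaling isomorphism. Define $T : W \to W(Q)$ by $(Tg)(y) = g(2y)$ for $y \in Q$. A direct change of variables shows $\|Tg\|_\nabla^2 = \|g\|_\nabla^2$, so $T$ is a surjective Hilbert-space isometry. A second change of variables gives, for $x \in \overline{Q}$ and $h \in W(Q)$,
\[
\tfrac{1}{\pi(2\epsilon)^2}\int_{D(2x,2\epsilon)\cap(0,1)^2}(T^{-1}h)(v)\,dv = \tfrac{1}{\pi\epsilon^2}\int_{D(x,\epsilon)\cap Q} h(u)\,du,
\]
so the functional on $W$ whose Riesz representer is $f_{2x,2\epsilon}$ pulls back through $T$ to the functional on $W(Q)$ whose Riesz representer is $\pi_Q f_{x,\epsilon}$. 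Isometry of $T$ forces $\pi_Q f_{x,\epsilon} = T f_{2x,2\epsilon}$, and hence
\[
\mathrm{Cov}(\hat X_\epsilon^x, \hat X_\epsilon^{x'}) = \langle \pi_Q f_{x,\epsilon}, \pi_Q f_{x',\epsilon}\rangle_\nabla = \langle f_{2x,2\epsilon}, f_{2x',2\epsilon}\rangle_\nabla = \mathrm{Cov}(X_{2\epsilon}^{2x}, X_{2\epsilon}^{2x'})
\]
for all $x, x' \in \overline{Q}$. Since both sides are centered Gaussian processes with matching covariances, the claimed distributional identity follows.

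The main subtlety to watch is boundary behavior: when $x \in \overline{Q}$ is near $\partial Q \cap (0,1)^2$, the disk $D(x,\epsilon)$ escapes $Q$, and one might worry that $\pi_Q f_{x,\epsilon}$ is not the representer of a genuinely mollified field on $Q$. The Hilbert-space formulation sidesteps this — the projection only picks up $D(x,\epsilon)\cap Q$ — and after scaling, this truncation corresponds exactly to the truncation of $D(2x,2\epsilon)$ by $\partial(0,1)^2$ built into the definition of $X_{2\epsilon}^{2x}$.
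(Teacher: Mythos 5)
Your decomposition step is exactly the paper's — orthogonal projection of the Riesz representers $f_{x,\epsilon}$ onto the subspace $W(Q)$, with independence following from orthogonality — but you finish the covariance computation by a genuinely different and arguably cleaner route. The paper (Claims 3.2 and 3.3) converts the covariance $\langle g_{x,\epsilon}, g_{x',\epsilon}\rangle_{\nabla,Q}$ into an explicit double integral against the Dirichlet Green function $G_Q$ of the subsquare $Q$, and then proves the scaling identity $G_Q(u/2,v/2)=G(u,v)$ via integration against test functions. You sidestep Green functions entirely: you exhibit the dilation $T:W\to W(Q)$, $(Tg)(y)=g(2y)$, observe that it is a Dirichlet isometry (this is the conformal invariance of Dirichlet energy, special to $d=2$ — the factor $2^2$ from $|\nabla(Tg)|^2$ exactly cancels the Jacobian $2^{-2}$), and check by one change of variables that $T$ carries the bounded functional defining $f_{2x,2\epsilon}$ to the one defining $\pi_Q f_{x,\epsilon}$; the isometry then forces $\pi_Q f_{x,\epsilon}=Tf_{2x,2\epsilon}$ and hence equality of the covariances. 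The two arguments encode the same scaling fact, but yours stays purely at the Hilbert-space level and avoids introducing $G_Q$ and the weak-formulation verification of Claim 3.3, which makes the structure of the proof a bit more transparent. Your closing remark on the boundary truncation $D(x,\epsilon)\cap Q \leftrightarrow D(2x,2\epsilon)\cap(0,1)^2$ is correctly identified as the point one must be careful about, and your Riesz-representer formulation handles it automatically, just as the paper's Claim 3.2 does.
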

\begin{proof}
Denote by $C_{c}^{\infty}\left(Q\right)$ the set of real valued $C^{\infty}$
functions with compact support in $Q$, and let $W(Q)$ be the corresponding
Hilbert space induced by the Dirichlet product in $C_{c}^{\infty}\left(Q\right)$.
Note that $C_{c}^{\infty}\left(Q\right)\subset C_{c}^{\infty}\left((0,1)^{2}\right)$
and 

\begin{equation}
\langle f,g\rangle_{\nabla,Q}:=\int_{Q}\nabla f(u)\cdot\nabla g(u)\, du=\int_{(0,1)^{2}}\nabla f(u)\cdot\nabla g(u)\, du\label{isom}
\end{equation}
for all $f,g\in C_{c}^{\infty}(Q)$. By taking the completion of $C_{c}^{\infty}\left(Q\right)$
with respect to the Dirichlet product, we see that $W(Q)$ is a Hilbert
subspace of $W\left((0,1)^{2}\right)$ and that \eqref{isom} holds
for all $f,g\in W(Q)$.

Let $f_{x,\epsilon}$ be as in \eqref{def} and decompose it as

\[
f_{x,\epsilon}=g_{x,\epsilon}+h_{x,\epsilon},
\]
where $g_{x,\epsilon}\in W(Q)$ and $h_{x,\epsilon}\in W(Q)^{\perp}$
(the orthogonal space). Set

\[
\hat{X}_{\epsilon}^{x}=X^{g_{x,\epsilon}}
\]
and

\[
\phi^{x}=X^{h_{x,\epsilon}}.
\]
Since $g_{x,\epsilon}\perp h_{y,\epsilon}$ for all $x,y\in[0,1]^{2}$,
the families $\left(\hat{X}_{\epsilon}^{x}:x\in[0,1]^{2}\right)$
and $\left(\phi^{x}:x\in[0,1]^{2}\right)$ are independent. Also,
since $f\mapsto X^{f}$ is a linear embedding of $W$ into $L^{2}\left(\Omega,\mathbb{P}\right)$, 

\[
X_{\epsilon}^{x}=\hat{X}_{\epsilon}^{x}+\phi^{x}\text{ a.s.}
\]
for every $x\in[0,1]^{2}$. 

We show now that $\left(\hat{X}_{\epsilon}^{x}:x\in\overline{Q}\right)$
is a copy of $\left(X_{2\epsilon}^{x}:x\in[0,1]^{2}\right)$.
\begin{claim}
For every $k\in W(Q)$, 

\[
\langle g_{x,\epsilon},k\rangle_{\nabla,Q}=\frac{1}{\pi\epsilon^{2}}\int_{D(x,\epsilon)\cap Q}k(u)\, du.
\]
\end{claim}
\begin{proof}
[Proof of Claim 3.2] By \eqref{isom},

\[
\langle g_{x,\epsilon},k\rangle_{\nabla,Q}=\langle g_{x,\epsilon},k\rangle_{\nabla}
\]
and, since $g_{x,\epsilon}=f_{x,\epsilon}-h_{x,\epsilon}$,

\[
\langle g_{x,\epsilon},k\rangle_{\nabla}=\langle f_{x,\epsilon},k\rangle_{\nabla}-\langle h_{x,\epsilon},k\rangle_{\nabla}.
\]
But $h_{x,\epsilon}\perp k$, so the second term on the right hand
side of the previous display vanishes. Using \eqref{def} and the
two previous displays, we obtain

\[
\langle g_{x,\epsilon},k\rangle_{\nabla,Q}=\frac{1}{\pi\epsilon^{2}}\int_{D(x,\epsilon)\cap(0,1)^{2}}k(u)\, du.
\]
Since $k\in W(Q)$, the function $k$ vanishes outside of $Q$. Therefore,

\[
\langle g_{x,\epsilon},k\rangle_{\nabla,Q}=\frac{1}{\pi\epsilon^{2}}\int_{D(x,\epsilon)\cap Q}k(u)\, du,
\]
as desired. 
\end{proof}
Claim 3.2 implies, in analogy with \eqref{green}, that the following
is true for all $x,y\in\overline{Q}$:

\[
Cov\left(\hat{X}_{\epsilon}^{x},\hat{X}_{\epsilon}^{y}\right)=\langle g_{x,\epsilon},g_{y,\epsilon}\rangle_{\nabla}=\langle g_{x,\epsilon},g_{y,\epsilon}\rangle_{\nabla,Q}=\frac{1}{\left(\pi\epsilon^{2}\right)^{2}}\int_{D(x,\epsilon)\cap Q\times D(y,\epsilon)\cap Q}G_{Q}(u,v)\, dudv,
\]
where $G_{Q}$ is the Green function of $Q$ for the operator $-\Delta$,
with Dirichlet boundary conditions on $\partial Q$. 
\begin{claim}
For every $u,v\in[0,1]^{2}$,

\[
G_{Q}\left(u/2,v/2\right)=G(u,v).
\]
\end{claim}
\begin{proof}
[Proof of Claim 3.3]Let $\phi\in C_{c}^{\infty}\left(\left(0,1\right)^{2}\right)$
and note that $\left(\Delta\phi\right)(2u)=\frac{1}{4}\Delta\left(\phi(2u)\right)$.
By the change of variables $u'=u/2$, 

\[
\int_{(0,1)^{2}}G_{Q}\left(u/2,v/2\right)\left(\Delta\phi\right)(u)\, du=\int_{Q}G_{Q}\left(u',v/2\right)\Delta\left(\phi\left(2u'\right)\right)\, du'=-\phi(2v/2),
\]
where the last equality holds by definition of $G_{Q}$. On the other
hand, 

\[
\int_{(0,1)^{2}}G(u,v)\left(\Delta\phi\right)(u)\, du=-\phi(v),
\]
by definition of $G$. Since

\[
\int_{(0,1)^{2}}G_{Q}\left(u/2,v/2\right)\left(\Delta\phi\right)(u)\, du=\int_{(0,1)^{2}}G(u,v)\left(\Delta\phi\right)(u)\, du
\]
for every $\phi\in C_{c}^{\infty}\left((0,1)^{2}\right)$, the functions
$G_{Q}(u/2,v/2)$ and $G(u,v)$ are identical (Lebesgue-a.e.).
\end{proof}
The change of variables $u'=2u,v'=2v$ implies

\[
Cov\left(\hat{X}_{\epsilon}^{x},\hat{X}_{\epsilon}^{y}\right)=\frac{1}{\left(\pi\epsilon^{2}\right)^{2}}\int_{D(x,\epsilon)\cap Q\times D(y,\epsilon)\cap Q}G_{Q}(u,v)\, dudv
\]

\[
=\frac{1}{\left(\pi(2\epsilon)^{2}\right)^{2}}\int_{D(2x,2\epsilon)\cap(0,1)^{2}\times D(2y,2\epsilon)\cap(0,1)^{2}}G_{Q}(u'/2,v'/2)\, du'dv',
\]
and Claim 3.3 implies that the previous display is

\[
=\frac{1}{\left(\pi(2\epsilon)^{2}\right)^{2}}\int_{D(2x,2\epsilon)\cap(0,1)^{2}\times D(2y,2\epsilon)\cap(0,1)^{2}}G(u',v')\, du'dv'=Cov\left(X_{2\epsilon}^{2x},X_{2\epsilon}^{2y}\right).
\]

For Gaussian fields, equality of the covariance structure implies
that the fields have the same distribution. Therefore,

\[
\left(\hat{X}_{\epsilon}^{x}:x\in\overline{Q}\right)\overset{d}{=}\left(X_{2\epsilon}^{2x}:x\in\overline{Q}\right),
\]
and the right hand side is clearly equal to $\left(X_{2\epsilon}^{x}:x\in[0,1]^{2}\right)$,
which finishes the proof of Proposition 3.1.
\end{proof}
Proposition 3.1 is true for any sub-square $Q\subset(0,1)^{2}$ of
side length $1/2$, because Green functions are translation invariant
(i.e., $G_{Q+z}\left(u+z,v+z\right)=G_{Q}(u,v)$ for any $z\in\mathbb{R}^{2},u,v\in Q$,
where $G_{Q+z}$ is the Green function of $Q+z$ for the operator
$-\Delta$, with Dirichlet boundary conditions on $\partial Q+z$).

\subsubsection*{Estimates on the covariance}

In this subsection, we prove that the ``bulk'' of the field $\left\{ \sqrt{\frac{\pi}{2}}X_{\epsilon}^{x}:x\in[0,1]^{2}\right\} $
satisfies both \eqref{logcorrY} and \eqref{l2distY}. Recall that
$\Gamma(\cdot,\cdot)=\Gamma\left(\left\Vert \cdot-\cdot\right\Vert \right)=\frac{2}{\pi}\log(1/\left\Vert \cdot-\cdot\right\Vert )$
is the Green function of $\mathbb{R}^{2}$ for the operator $-\Delta$.
\begin{prop}
\label{prop3.4} Let $K\subset(0,1)^{2}$ be such that $k=dist(\partial(0,1)^{2},K)>0$,
and let $0<\epsilon<k/2$. Then, there exists a constant $C<\infty$,
depending on $k$ only, such that, for all $x\in K,y\in[0,1]^{2}$,

\[
\left|\frac{1}{\pi\epsilon^{2}}\int_{D(x,\epsilon)}G(u,y)du-\frac{2}{\pi}\log(1/\epsilon)\right|\leq C
\]
if $\left\Vert y-x\right\Vert <\epsilon$, and 
\[
\left|\frac{1}{\pi\epsilon^{2}}\int_{D(x,\epsilon)}G(u,y)du-\frac{2}{\pi}\log(1/\left\Vert x-y\right\Vert )\right|\leq C
\]
if $\left\Vert y-x\right\Vert \geq\epsilon$. \end{prop}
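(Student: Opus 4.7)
The plan is to decompose the Dirichlet Green function as $G(u,y) = \Gamma(u,y) - H(u,y)$, where $\Gamma(u,y) = \frac{2}{\pi}\log(1/\|u-y\|)$ is the whole-plane fundamental solution of $-\Delta$ and $H(\cdot,y)$ is the bounded harmonic function on $(0,1)^2$ whose boundary values on $\partial(0,1)^2$ agree with $\Gamma(\cdot,y)$. With this splitting the disc average $\frac{1}{\pi\epsilon^2}\int_{D(x,\epsilon)} G(u,y)\,du$ separates into a whole-plane contribution and a boundary-correction contribution, and I bound each independently. Throughout, the hypothesis $\epsilon<k/2$ with $x\in K$ ensures $\overline{D(x,\epsilon)}\subset(0,1)^2$ at distance at least $k/2$ from $\partial(0,1)^2$.

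For the whole-plane contribution I use the classical angular average (Jensen's formula) $\frac{1}{2\pi}\int_0^{2\pi}\log\|x+\rho e^{i\theta}\|\,d\theta = \max\{\log\|x\|,\log\rho\}$, applied around $y$ and integrated in $\rho\in(0,\epsilon)$ against $\rho\,d\rho$, to compute explicitly
\[
\frac{1}{\pi\epsilon^2}\int_{D(x,\epsilon)}\log\frac{1}{\|u-y\|}\,du = \begin{cases}\log(1/\|x-y\|), & \|x-y\|\geq\epsilon,\\[2pt] \log(1/\epsilon)+\tfrac{1}{2}-\tfrac{\|x-y\|^2}{2\epsilon^2}, & \|x-y\|<\epsilon.\end{cases}
\]
Multiplying by $2/\pi$, the first case matches $\frac{2}{\pi}\log(1/\|x-y\|)$ exactly, and in the second case the deviation from $\frac{2}{\pi}\log(1/\epsilon)$ is at most $1/\pi$. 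Both are compatible with the bounds claimed in Proposition \ref{prop3.4}.

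For the boundary correction the key point is that $H(\cdot,y)$ is harmonic on $(0,1)^2$ (the singularities of $\Gamma$ and $G$ at $y$ cancel), hence harmonic on $D(x,\epsilon)$, so the mean value property gives $\frac{1}{\pi\epsilon^2}\int_{D(x,\epsilon)} H(u,y)\,du = H(x,y)$. It remains to show $|H(x,y)|\leq C(k)$ uniformly for $x\in K$ and $y\in[0,1]^2$. Using the probabilistic representation $H(x,y) = \frac{2}{\pi}\int_{\partial(0,1)^2}\log(1/\|z-y\|)\,\mu_x(dz)$, where $\mu_x$ is the harmonic measure from $x$, the upper bound follows from two facts: (i) for $x\in K$ the Poisson kernel of $(0,1)^2$ at $x$ is bounded above by a constant depending only on $k$ (standard Harnack/boundary estimates), and (ii) $\int_{\partial(0,1)^2}\log(1/\|z-y\|)\,dS(z)$ is uniformly bounded in $y\in[0,1]^2$ because a logarithmic singularity is integrable against one-dimensional arc length. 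A lower bound $H(x,y)\geq\frac{2}{\pi}\log(1/\sqrt{2})$ is immediate from $\|z-y\|\leq\sqrt{2}$.

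The main obstacle I anticipate is this uniform control on $H(x,y)$ when $y$ approaches $\partial(0,1)^2$: then $\Gamma(\cdot,y)$ develops a logarithmic singularity at $y$ on the boundary, and one must use the fact that the harmonic measure from $x\in K$ spreads its mass along $\partial(0,1)^2$ with bounded density to keep the boundary integral bounded uniformly in $y$. Once that Poisson-kernel estimate is in hand, the proposition follows by combining the two pieces with the triangle inequality, yielding a constant depending only on $k$.
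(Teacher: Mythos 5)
Your proposal is correct, and it uses the same basic decomposition $G=\Gamma-(\Gamma-G)$ as the paper, but it differs in both halves of the argument. For the whole-plane piece you compute the disc average of $\Gamma$ exactly via Jensen's formula, obtaining $\log(1/\|x-y\|)$ when $\|x-y\|\geq\epsilon$ and $\log(1/\epsilon)+\tfrac12-\|x-y\|^2/(2\epsilon^2)$ when $\|x-y\|<\epsilon$; the paper instead rescales the integral and invokes continuity on a compact family, which gives a bound without an explicit formula. Both are fine; yours is more explicit, the paper's avoids any calculation. For the boundary-correction piece, you use the mean-value property of the harmonic function $H(\cdot,y)=\Gamma(\cdot,y)-G(\cdot,y)$ to reduce the disc average to $H(x,y)$, and then bound $|H(x,y)|$ by the Poisson-kernel representation together with the integrability of a one-dimensional logarithmic singularity. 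This is where the paper takes a noticeably slicker route: it applies the maximum principle to $(G-\Gamma)(u,\cdot)$ \emph{in the second variable}, with $u$ fixed at distance $\geq k/2$ from $\partial(0,1)^2$. The boundary values there are $-\Gamma(u,z)$ with $\|u-z\|\in[k/2,\sqrt2]$, which are trivially bounded by $\Gamma(k/2)$, and the maximum principle propagates this bound to all $y\in[0,1]^2$. That choice of variable completely avoids the issue you flagged (the logarithmic singularity of $\Gamma(\cdot,y)$ on $\partial(0,1)^2$ as $y$ approaches the boundary), making the Poisson-kernel estimate and the arc-length integrability argument unnecessary. Your route is correct and standard, but requires an extra ingredient that the paper's order of variables renders moot.
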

\begin{proof}
The function $(G-\Gamma)(x,y)$ is symmetric, harmonic in each variable,
and continuous. Hence, 

\[
\left|\frac{1}{\pi\epsilon^{2}}\int_{D(x,\epsilon)}(G-\Gamma)(u,y)\, du\right|\leq\sup_{\{u:dist(u,\partial(0,1)^{2})\geq k/2\}}\sup_{\{y\in[0,1]^{2}\}}\left|(G-\Gamma)(u,y)\right|
\]

\[
\leq\sup_{u:dist(u,\partial(0,1)^{2})\geq k/2}|\Gamma(dist(u,\partial(0,1)^{2}))|=\Gamma(k/2),
\]
where the second bound is obtained by applying the maximum principle
to $(G-\Gamma)(u,\cdot)$, noting that $G(u,\cdot)$ vanishes at the
boundary of $(0,1)^{2}$, and using that $\Gamma$ is decreasing.
Therefore, it is enough to prove Proposition \ref{prop3.4} with $G$
replaced by $\Gamma$.

Suppose that $\left\Vert x-y\right\Vert <\epsilon$. Then, 
\[
\left|\frac{1}{\pi\epsilon^{2}}\int_{D(x,\epsilon)}\Gamma(u,y)-\Gamma(\epsilon)du\right|=\left|\frac{1}{\pi\epsilon^{2}}\int_{D(x,\epsilon)}\Gamma\left(\frac{\left\Vert u-y\right\Vert }{\epsilon}\right)du\right|.
\]
The change of variables $u'=(u-y)/\epsilon$ implies that the previous
display is 
\[
=\left|\frac{1}{\pi}\int_{D((x-y)/\epsilon,1)}\Gamma(u')du'\right|\leq\sup_{z\in\overline{D(0,1)}}\left|\frac{1}{\pi}\int_{D(z,1)}\Gamma(u')du'\right|\leq C,
\]
by continuity in $z$ and compactness of $\overline{D(0,1)}$, where
$C$ is an absolute constant.

Suppose now that $\left\Vert x-y\right\Vert \geq\epsilon$. Then,
\[
\left|\frac{1}{\pi\epsilon^{2}}\int_{D(x,\epsilon)}\Gamma(u,y)du-\Gamma(\left\Vert x-y\right\Vert )\, du\right|=\left|\frac{1}{\pi\epsilon^{2}}\int_{D(x,\epsilon)}\Gamma\left(\left\Vert \frac{u-y}{\left\Vert x-y\right\Vert }\right\Vert \right)du\right|.
\]
The change of variables $u'=(u-y)/\left\Vert x-y\right\Vert $ implies
that the previous line is 
\[
=\left|\frac{1}{\pi(\epsilon/\left\Vert x-y\right\Vert )^{2}}\int_{D\left(\frac{x-y}{\left\Vert x-y\right\Vert },\frac{\epsilon}{\left\Vert x-y\right\Vert }\right)}\Gamma(u')\, du'\right|\leq\sup_{0\leq r\leq1}\sup_{\left\Vert z\right\Vert =1}\left|\frac{1}{\pi r^{2}}\int_{D(z,r)}\Gamma(u')\, du'\right|<C,
\]
by continuity in $r,z$ and compactness of $\left\{ 0\leq r\leq1\right\} \times\left\{ \left\Vert z\right\Vert =1\right\} $,
where $C$ is an absolute constant. 
\end{proof}
Note that the fact that we are integrating over disks is not essential.
We could define similar MGFF for other mollifiers.

A trivial corollary (which follows from elementary properties of $\log$)
of the previous proposition is
\begin{cor}
\label{cor3.5} Let $K,k,\epsilon$ be as in Proposition \ref{prop3.4}
and let $c_{0}>0$. Then, there exists a constant $C$ (depending
on $k$ and $c_{0}$) such that, for all $x\in K,y\in[0,1]^{2}$,

\[
\left|\frac{1}{\pi\epsilon^{2}}\int_{D(x,\epsilon)}G(u,y)du-\frac{2}{\pi}\log(1/\epsilon)\right|\leq C
\]
whenever $\left\Vert x-y\right\Vert <c_{0}\epsilon$, and

\[
\left|\frac{1}{\pi\epsilon^{2}}\int_{D(x,\epsilon)}G(u,y)du-\frac{2}{\pi}\log(1/\left\Vert x-y\right\Vert )\right|\leq C
\]
whenever$\left\Vert x-y\right\Vert \geq c_{0}\epsilon$.
\end{cor}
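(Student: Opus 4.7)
The plan is to derive Corollary \ref{cor3.5} as an essentially cosmetic strengthening of Proposition \ref{prop3.4}: the only change is the threshold separating the two regimes, from $\|x-y\| = \epsilon$ to $\|x-y\| = c_0\epsilon$. For $\|x-y\|$ lying on the ``correct'' side of both thresholds simultaneously, Proposition \ref{prop3.4} applies verbatim and there is nothing to do. The only work is to handle the sliver of points where the two thresholds disagree, and there the idea is to invoke Proposition \ref{prop3.4} in its regime and then absorb the mismatch between $\log(1/\epsilon)$ and $\log(1/\|x-y\|)$ into the constant $C$, using that these two logarithms differ by at most $|\log c_0|$ whenever $\|x-y\|$ is between $\epsilon$ and $c_0\epsilon$ (or $c_0\epsilon$ and $\epsilon$).

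More concretely, I would split into two cases according to the size of $c_0$. First suppose $c_0 \leq 1$. For the first inequality in the corollary, $\|x-y\| < c_0\epsilon$ forces $\|x-y\| < \epsilon$, so the first inequality of Proposition \ref{prop3.4} applies directly. For the second inequality, if $\|x-y\| \geq \epsilon$ we can use the second inequality of Proposition \ref{prop3.4} as is; the remaining case is $c_0\epsilon \leq \|x-y\| < \epsilon$, where Proposition \ref{prop3.4} gives the approximation $\tfrac{2}{\pi}\log(1/\epsilon)$ up to an additive constant, and
\[
\Bigl|\tfrac{2}{\pi}\log(1/\epsilon) - \tfrac{2}{\pi}\log(1/\|x-y\|)\Bigr| = \tfrac{2}{\pi}\log(\epsilon/\|x-y\|) \leq \tfrac{2}{\pi}\log(1/c_0),
\]
which I fold into $C$. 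The case $c_0 \geq 1$ is symmetric: the second inequality of the corollary now follows directly from Proposition \ref{prop3.4}, and for the first inequality I treat separately $\|x-y\| < \epsilon$ (immediate) and $\epsilon \leq \|x-y\| < c_0\epsilon$ (where the second bound of Proposition \ref{prop3.4} gives $\tfrac{2}{\pi}\log(1/\|x-y\|)$, differing from $\tfrac{2}{\pi}\log(1/\epsilon)$ by at most $\tfrac{2}{\pi}\log c_0$).

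There is no real obstacle here; the corollary is really just the statement that a constant-factor rescaling of the threshold in Proposition \ref{prop3.4} costs only a constant, since $\log$ is Lipschitz on compact subsets of $(0,\infty)$ bounded away from $0$. The only thing to watch is that the new constant $C$ depends on $c_0$ (through $|\log c_0|$) in addition to the dependence on $k$ inherited from Proposition \ref{prop3.4}, which matches the statement.
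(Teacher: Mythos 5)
Your argument is correct and is exactly the intended route: the paper leaves this as a one-line remark ("follows from elementary properties of $\log$"), and your case split on $c_0 \lessgtr 1$, applying Proposition \ref{prop3.4} in the appropriate regime and absorbing the at-most-$\frac{2}{\pi}|\log c_0|$ discrepancy between $\log(1/\epsilon)$ and $\log(1/\|x-y\|)$ into the constant, is precisely what is meant.
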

Now we prove an important corollary of Proposition \ref{prop3.4}. 
\begin{cor}
\label{cor3.6} Let $K,k$ be as in Proposition \ref{prop3.4}. Then,
there exists a constant $C$ (depending only on $k$) such that, for
all $x,y\in K,\epsilon>0$, 
\begin{equation}
|Cov(X_{\epsilon}^{x},X_{\epsilon}^{y})+\frac{2}{\pi}\log(\max\{\epsilon,\left\Vert x-y\right\Vert \})|\leq C.\label{cov}
\end{equation}
Moreover, if $\left\Vert x-y\right\Vert \leq\epsilon$, then 
\begin{equation}
\mathbb{E}\left(X_{\epsilon}^{x}-X_{\epsilon}^{y}\right)^{2}\leq C\epsilon^{-1}\left\Vert x-y\right\Vert .\label{l2_1}
\end{equation}
 \end{cor}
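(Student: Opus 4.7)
The plan is to prove the two bounds separately: the covariance estimate \eqref{cov} follows almost immediately from Proposition \ref{prop3.4}, while the increment bound \eqref{l2_1} requires a more careful scaling analysis, since \eqref{cov} alone only yields $O(1)$ on $\mathbb{E}(X_\epsilon^x - X_\epsilon^y)^2$ when $\|x-y\| \leq \epsilon$, which is much weaker than the required $C\epsilon^{-1}\|x-y\|$ in the regime $\|x-y\| \ll \epsilon$.

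\textbf{Proof of \eqref{cov}.} Starting from \eqref{green}, I view $Cov(X_\epsilon^x, X_\epsilon^y)$ as an average, over $v \in D(y,\epsilon) \cap (0,1)^2$, of the inner integral $F_\epsilon(x, v) := \frac{1}{\pi\epsilon^2} \int_{D(x,\epsilon) \cap (0,1)^2} G(u,v)\, du$. Proposition \ref{prop3.4} gives, uniformly for $x \in K$ and $v \in [0,1]^2$, that $F_\epsilon(x, v) = \frac{2}{\pi}\log(1/\max\{\epsilon, \|x-v\|\}) + O(1)$. When $v$ ranges over $D(y,\epsilon)$, the triangle inequality shows that $\max\{\epsilon, \|x-v\|\}$ is comparable within a bounded factor to $\max\{\epsilon, \|x-y\|\}$ (splitting into the two cases $\|x-y\|\le\epsilon$ and $\|x-y\|>\epsilon$), so the logarithms agree up to $O(1)$. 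Averaging over $v$ (the disk has full measure $\pi\epsilon^2$ when $y \in K$ and $\epsilon < k$) then produces \eqref{cov}.

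\textbf{Proof of \eqref{l2_1}.} Fix $x, y \in K$ with $\|x-y\| \leq \epsilon < k$; both $D(x,\epsilon), D(y,\epsilon) \subset (0,1)^2$. Using \eqref{def} and \eqref{GRRREN},
\[
\mathbb{E}(X_\epsilon^x - X_\epsilon^y)^2 = \|f_{x,\epsilon} - f_{y,\epsilon}\|_\nabla^2 = \frac{1}{(\pi\epsilon^2)^2} \iint \Phi(u)\Phi(v) G(u,v)\, du\, dv,
\]
where $\Phi := \mathbf{1}_{D(x,\epsilon)} - \mathbf{1}_{D(y,\epsilon)}$. I decompose $G(u,v) = \frac{2}{\pi}\log(1/\|u-v\|) + H(u,v)$, where $H$ is smooth on $K' \times [0,1]^2$ for a slightly shrunken bulk $K' \supset K$ (by harmonicity of $H$ in each variable and boundedness of its boundary values for $u$ away from $\partial(0,1)^2$). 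The $H$ piece is controlled by the translation identity $\mathbf{1}_{D(y,\epsilon)}(\cdot) = \mathbf{1}_{D(x,\epsilon)}(\cdot - (y-x))$ together with the Lipschitz regularity of $H$ in its first argument: this yields $|\int \Phi(u) H(u,v)\, du| \leq C\epsilon^2 \|x-y\|$ uniformly in $v$, and integrating once more against $\Phi$ (whose support has measure $O(\epsilon \|x-y\|)$) gives an overall contribution of $O(\epsilon^{-1}\|x-y\|^2) \leq O(\|x-y\|)$ after dividing by $(\pi\epsilon^2)^2$, well within the target.

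For the logarithmic piece, I rescale $u = x + \epsilon u'$, $v = x + \epsilon v'$, and set $\eta := (y-x)/\epsilon$ (so $\|\eta\| \leq 1$), reducing the integral to $\frac{2\epsilon^4}{\pi}\iint \tilde\Phi(u')\tilde\Phi(v')[\log(1/\epsilon) + \log(1/\|u'-v'\|)]\, du'\, dv'$ with $\tilde\Phi := \mathbf{1}_{D(0,1)} - \mathbf{1}_{D(\eta,1)}$. The crucial cancellation is $\int \tilde\Phi = 0$ (the two unit disks have equal area), which annihilates the diverging $\log(1/\epsilon)$ factor. The remaining scale-free integral is bounded by $C\|\eta\|$ by splitting $\tilde\Phi$ into its symmetric-difference pieces $D(0,1)\setminus D(\eta,1)$ and $D(\eta,1)\setminus D(0,1)$, each of area $O(\|\eta\|)$, and using the uniform bound $\sup_v \int_{D(v,3)} |\log(1/\|u-v\|)|\, du < \infty$. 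Restoring the factor $1/(\pi\epsilon^2)^2$ yields the required $O(\epsilon^{-1}\|x-y\|)$ contribution. The main obstacle is exactly this logarithmic piece: both $Var(X_\epsilon^x)$ and $Cov(X_\epsilon^x, X_\epsilon^y)$ carry the same leading $\frac{2}{\pi}\log(1/\epsilon)$, and identifying the area cancellation $\int\tilde\Phi=0$ after rescaling to unit disks is what removes this diverging factor and exposes the genuine linear dependence on $\|x-y\|/\epsilon$.
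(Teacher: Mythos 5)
Your proof of \eqref{cov} follows essentially the same route as the paper's: apply Proposition \ref{prop3.4} (the paper uses its consequence Corollary \ref{cor3.5}) to the inner integral in \eqref{green}, then observe that $\max\{\epsilon,\|x-v\|\}$ and $\max\{\epsilon,\|x-y\|\}$ are comparable within bounded factors for $v\in D(y,\epsilon)$, and average. For \eqref{l2_1} you take a genuinely different, and correct, route. The paper splits $\mathbb{E}(X_\epsilon^x-X_\epsilon^y)^2=Cov(X_\epsilon^x,X_\epsilon^x-X_\epsilon^y)+Cov(X_\epsilon^y,X_\epsilon^y-X_\epsilon^x)$, writes each covariance via \eqref{green} as a difference of integrals over the two equal-area sets $D(x,\epsilon)\setminus D(y,\epsilon)$ and $D(y,\epsilon)\setminus D(x,\epsilon)$ (each at distance less than $2\epsilon$ from $x$), applies Corollary \ref{cor3.5} to pin the inner integral to $\Gamma(\epsilon)+O(1)$, and then lets the equal areas cancel the $\Gamma(\epsilon)$ terms, leaving $C\epsilon^{-2}|D(x,\epsilon)\setminus D(y,\epsilon)|\leq C\epsilon^{-1}\|x-y\|$. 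You instead expand the Dirichlet norm $\|f_{x,\epsilon}-f_{y,\epsilon}\|_\nabla^2$ directly as a double integral of $G$ against $\Phi\otimes\Phi$ with $\Phi=1_{D(x,\epsilon)}-1_{D(y,\epsilon)}$, decompose $G=\Gamma+H$ with $H$ harmonic in each variable and therefore uniformly Lipschitz in the bulk (the resulting $H$-contribution is $O(\epsilon^{-1}\|x-y\|^2)\leq O(\|x-y\|)$, within target since $\|x-y\|\leq\epsilon$), and for the $\Gamma$ piece rescale to unit disks so that the exact cancellation $\int\tilde\Phi=0$ annihilates the divergent $\log(1/\epsilon)$, leaving a scale-free logarithmic integral over the symmetric difference which is $O(\|x-y\|/\epsilon)$. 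Both arguments are sound; the paper's is shorter because it simply recycles Corollary \ref{cor3.5} twice, while your version makes the mechanism that kills the $\log(1/\epsilon)$ divergence fully explicit via the area cancellation after rescaling, and it cleanly separates the scale-invariant local estimate from the regularity of the Green function in the bulk, without reusing Proposition \ref{prop3.4} for the $L^2$-distance estimate at all.
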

\begin{proof}
Let us prove \eqref{cov}. If $\left\Vert x-y\right\Vert \leq2\epsilon$,
by Corollary \ref{cor3.5}, 
\[
\left|\frac{1}{\pi\epsilon^{2}}\int_{D(y,\epsilon)}G(u,v)\, dv-\Gamma(\epsilon)\right|\leq C
\]
for every $u\in D(x,\epsilon)$. Integrating the last inequality over
$u\in D(x,\epsilon)$ and using \eqref{green}, we obtain that

\[
\left|Cov(X_{\epsilon}^{x},X_{\epsilon}^{y})-\Gamma(\epsilon)\right|\leq C
\]
for all $\left\Vert x-y\right\Vert \leq2\epsilon$ (and in particular,
for $\left\Vert x-y\right\Vert \leq\epsilon$).

If $\left\Vert x-y\right\Vert \geq2\epsilon$, Corollary \ref{cor3.5}
implies 
\[
\left|\frac{1}{\pi\epsilon^{2}}\int_{D(y,\epsilon)}G(u,v)dv-\Gamma(\left\Vert y-u\right\Vert )\right|\leq C
\]
for every $u\in D(x,\epsilon)$. But $\Gamma(3/2)\leq\Gamma(1+\tfrac{\epsilon}{\left\Vert x-y\right\Vert })\leq\Gamma(\left\Vert y-u\right\Vert )-\Gamma(\left\Vert x-y\right\Vert )\leq\Gamma(1-\tfrac{\epsilon}{\left\Vert x-y\right\Vert })\leq\Gamma(1/2)$
for all $u\in D(x,\epsilon)$. Therefore,

\[
\left|\frac{1}{\pi\epsilon^{2}}\int_{D(y,\epsilon)}G(u,v)dv-\Gamma(\left\Vert x-y\right\Vert )\right|\leq C.
\]
The same (with a different constant) holds for $\left\Vert x-y\right\Vert \geq\epsilon$,
because $\Gamma$ is logarithmic. Integrating over $u\in D(x,\epsilon)$
finishes the proof of \eqref{cov}.

We now prove \eqref{l2_1}. Display \eqref{green} implies 
\[
Cov(X_{\epsilon}^{x},X_{\epsilon}^{x}-X_{\epsilon}^{y})=\frac{1}{\pi^{2}\epsilon^{4}}\int_{D(x,\epsilon)}\int_{D(x,\epsilon)}G-\frac{1}{\pi^{2}\epsilon^{4}}\int_{D(x,\epsilon)}\int_{D(y,\epsilon)}G
\]
 
\[
=\frac{1}{\pi^{2}\epsilon^{4}}\left(\int_{D(x,\epsilon)\backslash D(y,\epsilon)}\int_{D(x,\epsilon)}G-\int_{D(y,\epsilon)\backslash D(x,\epsilon)}\int_{D(x,\epsilon)}G\right).
\]
We can use Corollary \ref{cor3.5} to obtain an upper bound of the
first term and a lower bound of the second term of the previous display.
Then, the previous display is 
\[
\leq\frac{1}{\pi\epsilon^{2}}\left(\int_{D(x,\epsilon)\backslash D(y,\epsilon)}(\Gamma(\epsilon)+C)-\int_{D(y,\epsilon)\backslash D(x,\epsilon)}(\Gamma(\epsilon)-C)\right)=\frac{C}{\pi\epsilon^{2}}\left|D(x,\epsilon)\backslash D(y,\epsilon)\right|,
\]
where $\left|D(x,\epsilon)\backslash D(y,\epsilon)\right|$ is the
Lebesgue measure of the set $D(x,\epsilon)\backslash D(y,\epsilon)$.
Elementary geometry implies $|D(x,\epsilon)\backslash D(y,\epsilon)|\leq C\epsilon\left\Vert x-y\right\Vert $.
Repeating the previous argument for $Cov(X_{\epsilon}^{y},X_{\epsilon}^{y}-X_{\epsilon}^{x})$
finishes the proof.
\end{proof}

\subsection{Tightness for the MGFF}

In the next theorem we provide upper bounds on the left and right
tail of the MGFF, and we compute the expected maximum up to an order
1 term.
\begin{thm}
For $\epsilon>0$, let $X_{\epsilon}^{x},x\in[0,1]^{2}$ be the MGFF.
Then, there exist absolute constants $0<c,C<\infty$ such that

\begin{equation}
\mathbb{P}\left(\left|\max_{x\in[0,1]^{2}}X_{\epsilon}^{x}-\sqrt{\frac{2}{\pi}}m_{\epsilon}\right|\geq+\lambda\right)\leq Ce^{-c\lambda}\label{rtail}
\end{equation}
for all $\lambda\geq0$. Moreover,

\[
\mathbb{E}\left[\max_{x\in[0,1]^{2}}X_{\epsilon}^{x}\right]=\sqrt{\frac{2}{\pi}}m_{\epsilon}+O(1).
\]
\end{thm}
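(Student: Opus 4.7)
My plan is to prove \eqref{rtail} by reducing the maximum over $[0,1]^{2}$ to maxima over bulk sub-regions via Proposition \ref{prop3.1} and Corollary \ref{cor3.6}, and then applying Theorem \ref{main}. The ``moreover'' statement then follows from \eqref{rtail} by the standard identity
\[
\Bigl|\mathbb{E}\max_{x\in[0,1]^{2}}X_{\epsilon}^{x}-\sqrt{\tfrac{2}{\pi}}\,m_{\epsilon}\Bigr|\leq\int_{0}^{\infty}\mathbb{P}\bigl(|\max_{x}X_{\epsilon}^{x}-\sqrt{\tfrac{2}{\pi}}\,m_{\epsilon}|\geq\lambda\bigr)\,d\lambda\leq C/c=O(1).
\]

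For the first step, I would verify, via Corollary \ref{cor3.6} with bulk width $k=1/4$, that the reparametrized field $\tilde{Y}_{\epsilon}^{y}:=\sqrt{\pi/2}\,X_{\epsilon/2}^{1/4+y/2}$ on $y\in[0,1]^{2}$ satisfies the hypotheses \eqref{logcorrY} and \eqref{l2distY} with a constant $C_{Y}$ depending only on $1/4$; the scalar $\sqrt{\pi/2}$ precisely cancels the $2/\pi$ factor appearing in Corollary \ref{cor3.6}. Theorem \ref{main} applied to $\tilde{Y}_{\epsilon}$, after unpacking the rescaling and using $m_{\epsilon/2}=m_{\epsilon}+O(1)$, then yields
\[
\mathbb{P}\bigl(|\max_{x\in[1/4,3/4]^{2}}X_{\epsilon}^{x}-\sqrt{\tfrac{2}{\pi}}\,m_{\epsilon}|\geq\lambda\bigr)\leq Ce^{-c\lambda}\qquad(\lambda\geq0).
\]
The lower tail half of \eqref{rtail} is then immediate from the inclusion $[1/4,3/4]^{2}\subset[0,1]^{2}$, so only the upper tail remains.

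The main obstacle is the upper tail over all of $[0,1]^{2}$, since Corollary \ref{cor3.6} degrades near $\partial(0,1)^{2}$. I would cover $[0,1]^{2}$ by a finite collection of sub-squares $Q$ of side $1/2$ (and, where necessary, further sub-squares at smaller scales) chosen so that every point of $[0,1]^{2}$ lies in the bulk of some $Q$ in the collection. For each such $Q$, Proposition \ref{prop3.1} gives the decomposition $X_{\epsilon}=\hat{X}_{\epsilon}^{Q}+\phi^{Q}$, with $\hat{X}_{\epsilon}^{Q}|_{Q}\stackrel{d}{=}X_{2\epsilon}|_{[0,1]^{2}}$ (after rescaling) and $\phi^{Q}$ independent of $\hat{X}_{\epsilon}^{Q}$. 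Restricting to the bulk of $Q$, the first summand is a rescaled copy of $X_{2\epsilon}$ on the bulk of $[0,1]^{2}$, to which the previous step applied at scale $2\epsilon$ gives exponential tails around $\sqrt{2/\pi}\,m_{2\epsilon}=\sqrt{2/\pi}\,m_{\epsilon}+O(1)$; and $\phi^{Q}$ has $O(1)$ variance on bulk points (by Corollary \ref{cor3.6}, $Var(\phi^{Q,x})=Var(X_{\epsilon}^{x})-Var(\hat{X}_{\epsilon}^{Q,x})=\tfrac{2}{\pi}\log2+O(1)$), so its maximum has Gaussian tails by Borell's inequality. A union bound over the collection then yields the upper tail of \eqref{rtail}. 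The delicate point is choosing the covering collection: the bulks of $1/2$-sub-squares only cover $[1/8,7/8]^{2}$, so handling the remaining boundary strip requires iterating Proposition \ref{prop3.1} at smaller scales, carefully balancing the $O(1)$ decrease of $\sqrt{2/\pi}\,m_{2^{k}\epsilon}$ against the accumulated $\phi$-corrections at each iteration.
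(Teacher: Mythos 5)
Your lower-tail argument and the deduction of the ``moreover'' statement from \eqref{rtail} are correct and match the paper's proof. The gap is in your upper-tail strategy. Covering $[0,1]^{2}$ by bulks of sub-squares at decreasing scales does not terminate: the bulks of any finite collection of sub-squares of $(0,1)^{2}$ always miss a strip along $\partial[0,1]^{2}$, so you must iterate Proposition \ref{prop3.1} indefinitely. Each iteration doubles the mollification scale (so the recentering drops by $\sqrt{2/\pi}\cdot 2\log 2$ per step), adds an independent $\phi$-correction of $O(1)$ variance, and multiplies the number of sub-squares under the union bound. You flag this as ``delicate'' but leave the convergence of the union bound over scales, the control of the accumulated $\phi$-terms, and the treatment of the remaining strip at the finest scale $\sim\epsilon$ entirely unaddressed; as written this is a genuine gap, not a routine verification.

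The paper avoids all of this by reading Proposition \ref{prop3.1} in reverse, once, rather than as a covering tool. Decompose $X_{\epsilon/2}=\hat{X}_{\epsilon/2}+\phi$ on $\overline{Q}$ with $\hat{X}_{\epsilon/2}|_{\overline{Q}}\overset{d}{=}X_{\epsilon}|_{[0,1]^{2}}$. Then $\max_{[0,1]^{2}}X_{\epsilon}\overset{d}{=}\max_{\overline{Q}}\hat{X}_{\epsilon/2}=\hat{X}_{\epsilon/2}^{\chi}$ where $\chi$ is the argmax over $\overline{Q}$. Since $\phi$ is centered and independent of $\hat{X}_{\epsilon/2}$ (hence of $\chi$), $\mathbb{P}(\phi^{\chi}\geq0\mid\chi)=1/2$, giving
\[
\mathbb{P}\bigl(\hat{X}_{\epsilon/2}^{\chi}\geq a\bigr)=2\,\mathbb{P}\bigl(\hat{X}_{\epsilon/2}^{\chi}\geq a,\ \phi^{\chi}\geq0\bigr)\leq2\,\mathbb{P}\bigl(\max_{x\in\overline{Q}}X_{\epsilon/2}^{x}\geq a\bigr),
\]
and the right-hand side is controlled by your bulk estimate (Theorem \ref{main} applied on $\overline{Q}$ at scale $\epsilon/2$). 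This single factor-of-two trick converts a bulk bound into a full-domain upper-tail bound with no covering, no iteration, and no boundary analysis; you should replace your covering scheme with it.
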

\begin{proof}
Let $Q$ be the open square of side length $1/2$, which is concentric
with $(0,1)^{2}$, and let $q:[0,1]^{2}\to\overline{Q}$ be the natural
concentric contraction. Consider the field $Y_{\epsilon}^{x}:=X_{\epsilon/2}^{q(x)};x\in[0,1]^{2}$.
By Corollary \ref{cor3.6},

\[
Cov(Y_{\epsilon}^{x},Y_{\epsilon}^{y})=Cov(X_{\epsilon/2}^{q(x)},X_{\epsilon/2}^{q(y)})=\frac{2}{\pi}\log\left(\max\left\{ \epsilon/2,\left\Vert q(x)-q(y)\right\Vert \right\} \right)+O(1)
\]

\[
=\frac{2}{\pi}\log\left(\max\left\{ \epsilon,\left\Vert x-y\right\Vert \right\} \right)+O(1)
\]
for all $x,y\in[0,1]^{2}$, and

\[
\mathbb{E}\left(Y_{\epsilon}^{x}-Y_{\epsilon}^{y}\right)^{2}=\mathbb{E}\left(X_{\epsilon/2}^{q(x)}-X_{\epsilon/2}^{q(y)}\right)^{2}\leq C\epsilon^{-1}2\left\Vert q(x)-q(y)\right\Vert =C\epsilon^{-1}\left\Vert x-y\right\Vert 
\]
for all $x,y\in[0,1]^{2}$ such that $\left\Vert x-y\right\Vert \leq\epsilon$.
An application of Theorem \ref{main} yields the existence of absolute
constants $0<c,C<\infty$ such that

\begin{equation}
\mathbb{P}\left(\max_{x\in[0,1]^{2}}Y_{\epsilon}^{x}-\sqrt{\frac{2}{\pi}}m_{\epsilon}\geq\lambda\right)=\mathbb{P}\left(\max_{x\in\overline{Q}}X_{\epsilon/2}^{x}-\sqrt{\frac{2}{\pi}}m_{\epsilon}\geq\lambda\right)\leq Ce^{-c\lambda}\label{asdtail}
\end{equation}
and 

\begin{equation}
\mathbb{P}\left(\max_{x\in\overline{Q}}X_{\epsilon/2}^{x}-\sqrt{\frac{2}{\pi}}m_{\epsilon}\leq\lambda\right)\leq Ce^{-c\lambda}\label{asdasdtail}
\end{equation}
for all $\lambda\geq0$. Bound \eqref{asdasdtail} easily implies
that

\[
\mathbb{P}\left(\max_{x\in[0,1]^{2}}X_{\epsilon/2}^{x}-\sqrt{\frac{2}{\pi}}m_{\epsilon}\leq\lambda\right)\leq\mathbb{P}\left(\max_{x\in\overline{Q}}X_{\epsilon/2}^{x}-\sqrt{\frac{2}{\pi}}m_{\epsilon}\leq\lambda\right)\leq Ce^{-c\lambda}
\]
for all $\lambda\geq0$, proving \eqref{rtail} for the left tail
(after using $m_{\epsilon/2}=m_{\epsilon}+O(1)$, and adjusting the
constants).

In order to prove the bound \eqref{rtail} for the right tail, we
use Proposition \ref{prop3.1} and the comment that follows it to
decompose

\[
X_{\epsilon/2}^{x}=\hat{X}_{\epsilon/2}^{x}+\phi^{x},
\]
where $\left(\hat{X}_{\epsilon/2}^{x}:x\in\overline{Q}\right)\overset{d}{=}\left(X_{\epsilon}^{x}:x\in[0,1]^{2}\right)$
and the fields $\left(\phi^{x}:x\in\overline{Q}\right),\left(\hat{X}_{\epsilon/2}^{x}:x\in\overline{Q}\right)$
are independent. If $\chi=\arg\max\left\{ \hat{X}_{\epsilon/2}^{x}:x\in\overline{Q}\right\} $,
then

\[
\left\{ \phi^{\chi}\geq0,\hat{X}_{\epsilon/2}^{\chi}-\sqrt{\frac{2}{\pi}}m_{\epsilon}\geq\lambda\right\} \subset\left\{ \max_{x\in\overline{Q}}X_{\epsilon/2}^{x}-\sqrt{\frac{2}{\pi}}m_{\epsilon}\geq\lambda\right\} .
\]
But independence of $\phi$ and $\chi$ implies

\[
\mathbb{P}\left(\phi^{\chi}\geq0,\hat{X}_{\epsilon/2}^{\chi}-\sqrt{\frac{2}{\pi}}m_{\epsilon}\geq\lambda\right)=\frac{1}{2}\mathbb{P}\left(\hat{X}_{\epsilon/2}^{\chi}-\sqrt{\frac{2}{\pi}}m_{\epsilon}\geq\lambda\right)
\]
because $\phi$ is a centered field. By using the last display and
\eqref{asdtail}, we obtain

\[
\mathbb{P}\left(\hat{X}_{\epsilon/2}^{\chi}-\sqrt{\frac{2}{\pi}}m_{\epsilon}\geq\lambda\right)\leq2\mathbb{P}\left(\max_{x\in\overline{Q}}X_{\epsilon/2}^{x}-\sqrt{\frac{2}{\pi}}m_{\epsilon}\geq\lambda\right)\leq Ce^{-c\lambda}
\]
for some absolute constants $0<c,C<\infty$.

The bound \eqref{rtail} and $m_{\epsilon/2}=m_{\epsilon}+O(1)$ implies
tightness of the family

\[
\left\{ \max_{x\in[0,1]^{2}}X_{\epsilon}^{x}-m_{\epsilon}:\epsilon>0\right\} ,
\]
and the same bound also implies 

\[
\mathbb{E}\left[\max_{x\in[0,1]^{2}}X_{\epsilon}^{x}\right]=\sqrt{\frac{2}{\pi}}m_{\epsilon}+O(1),
\]
finishing the proof.
\end{proof}

\section{Appendix}

We prove here the claims made in Section 2.1.
\begin{prop}
\label{proofdefMBRW}The MBRW, defined by display \eqref{defMBRW},
exists and satisfies
\[
Var(\xi_{\epsilon}^{v}(t))=t
\]
for all $0\leq t\leq\log(1/\epsilon)$ and all $v\in V_{\epsilon}$,
and
\[
t-C\leq Cov(\xi_{\epsilon}^{v}(t),\xi_{\epsilon}^{w}(t))\leq t
\]
for all $0\leq t\leq-\log\left\Vert v-w\right\Vert _{\infty}$ and
all $v,w\in V_{\epsilon}$, where $C$ is a constant depending on
the dimension. \end{prop}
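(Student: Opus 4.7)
The plan is to construct $\xi$ explicitly as a stochastic integral against a space-time white noise, which proves existence and makes all covariance identities transparent. Let $W$ denote standard space-time white noise on $[0,\infty)\times\mathbb{R}^{d}$, and for $v\in V_{\epsilon}$ and $r\geq 0$ set $R_{r}(v):=\prod_{i=1}^{d}[v_{i}-e^{-r}/2,v_{i}+e^{-r}/2)$. I would then define
\[
\xi_{\epsilon}^{v}(t):=\int_{[0,t]\times\mathbb{R}^{d}} e^{dr/2}\,\mathbf{1}_{R_{r}(v)}(x)\,dW(r,x),\qquad 0\le t\le\log(1/\epsilon).
\]
This integral is well defined because the $L^{2}$ norm of the integrand equals $\int_{0}^{t}e^{dr}|R_{r}(v)|\,dr=t$, and the resulting process is Gaussian and centered by construction. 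The Itô isometry gives
\[
\mathrm{Cov}(\xi_{\epsilon}^{v}(t),\xi_{\epsilon}^{u}(s))=\int_{0}^{\min\{t,s\}}e^{dr}|R_{r}(v)\cap R_{r}(u)|\,dr=\int_{0}^{\min\{t,s\}}\prod_{i=1}^{d}(1-e^{r}|v_{i}-u_{i}|)_{+}\,dr,
\]
matching \eqref{defMBRW} exactly; taking $v=u$ and $s=t$ yields $\mathrm{Var}(\xi_{\epsilon}^{v}(t))=t$.

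For the covariance bounds, fix $v\neq w$ in $V_{\epsilon}$ and $0\le t\le -\log\|v-w\|_{\infty}$. The upper bound is immediate: each factor $(1-e^{r}|v_{i}-w_{i}|)_{+}$ lies in $[0,1]$, so the integrand is $\le 1$ and the integral is $\le t$. For the lower bound, observe that under our assumption on $t$ we have, for every $r\in[0,t]$ and every coordinate $i$,
\[
a_{i}(r):=e^{r}|v_{i}-w_{i}|\le e^{t}\|v-w\|_{\infty}\le 1,
\]
so the standard inequality $\prod_{i=1}^{d}(1-a_{i})\ge 1-\sum_{i=1}^{d}a_{i}$ (proved by induction on $d$, using $a_{i}\in[0,1]$) applies. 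Integrating gives
\[
t-\mathrm{Cov}(\xi_{\epsilon}^{v}(t),\xi_{\epsilon}^{w}(t))\le\int_{0}^{t}\sum_{i=1}^{d}e^{r}|v_{i}-w_{i}|\,dr=(e^{t}-1)\sum_{i=1}^{d}|v_{i}-w_{i}|\le e^{t}\|v-w\|_{\infty}\cdot d\le d,
\]
so the claim holds with $C=d$.

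I do not anticipate a serious obstacle: existence follows at once from the white-noise representation, the variance identity and the upper covariance bound are essentially tautological, and the lower bound reduces to the elementary inequality $\prod(1-a_{i})\ge 1-\sum a_{i}$ on $[0,1]^{d}$. The only place where care is needed is checking that the hypothesis $t\le -\log\|v-w\|_{\infty}$ is precisely what is required both to drop the positive-part operation in the product and to guarantee $a_{i}(r)\in[0,1]$, so that the linearization inequality is valid.
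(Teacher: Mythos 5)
Your argument is correct. It takes the same underlying idea as the paper but packages it differently, in two respects that are worth noting. For existence, the paper verifies positive semi-definiteness of the covariance kernel directly by writing $\prod_i(1-e^r|v_i-u_i|)_+$ as the Lebesgue measure of the intersection of two unit boxes centered at $e^r v$ and $e^r u$ and then exhibiting the resulting quadratic form as $\int\!\int(\sum_\alpha c_\alpha 1_{[0,t^\alpha]}1_{A(v^\alpha,r)})^2$. Your white-noise construction is the explicit isometric embedding that this computation implicitly encodes (your boxes are of shrinking side $e^{-r}$ centered at $v$, with the $e^{dr/2}$ prefactor compensating the volume; the two parametrizations are equivalent after scaling), so you obtain existence by exhibiting the field rather than by checking positive definiteness — slightly more constructive, mathematically the same content. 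For the lower covariance bound, the paper drops the positive part, bounds $\prod_i(1-e^r|v_i-w_i|)\ge(1-e^r\|v-w\|_\infty)^d$, and integrates a binomial expansion term by term; you instead invoke the Weierstrass/Bernoulli inequality $\prod_i(1-a_i)\ge 1-\sum_i a_i$ on $[0,1]^d$ and integrate. Both are elementary; yours is a touch cleaner and gives the explicit constant $C=d$, whereas the paper's binomial route produces a larger $d$-dependent constant. In both steps you correctly identify that $t\le-\log\|v-w\|_\infty$ is exactly what makes each $a_i(r)=e^r|v_i-w_i|$ lie in $[0,1]$ on $[0,t]$, which lets you drop the $(\cdot)_+$ and apply the product inequality. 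No gaps.
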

\begin{proof}
We show that the mapping $\left(V_{\epsilon}\times[0,\log(1/\epsilon)]\right)^{2}\to\mathbb{R}$
given by 
\[
\left((v,t),(u,s)\right)\mapsto\int_{0}^{\min\left\{ t,s\right\} }\prod_{i\leq d}(1-e^{r}\left|v_{i}-u_{i}\right|)_{+}dr
\]
is positive definite. Note first that
\[
\prod_{i\leq d}(1-e^{r}\left|v_{i}-u_{i}\right|)_{+}=\int_{\mathbb{R}^{d}}1_{A(v,r)}(z)1_{A(u,r)}(z)dz,
\]
where $dz$ is $d$-dimensional Lebesgue measure and $A(v,r)$ is
the $d$-dimensional box of side length 1, centered at $e^{r}v$.
Let $\left\{ (v^{\alpha},t^{\alpha})\right\} _{\alpha}$ be any finite
subset of $V_{\epsilon}\times[0,\log(1/\epsilon)]$, and let $\left\{ c_{\alpha}\right\} _{\alpha}$
be arbitrary real numbers. Then, applying the previous display, we
obtain
\[
\sum_{\alpha,\beta}c_{\alpha}c_{\beta}\int_{0}^{\min\left\{ t^{\alpha},t^{\beta}\right\} }\prod_{i\leq d}(1-e^{r}\left|v_{i}^{\alpha}-v_{i}^{\beta}\right|)_{+}dr
\]
\[
=\int_{0}^{\infty}\int_{\mathbb{R}^{d}}\sum_{\alpha,\beta}c_{\alpha}c_{\beta}1_{[0,t^{\alpha}]}(r)1_{[0,t^{\beta}]}(r)1_{A(v^{\alpha},r)}(z)1_{A(v^{\beta},r)}(z)\, dz\, dr
\]
\[
=\int_{0}^{\infty}\int_{\mathbb{R}^{d}}\left(\sum_{\alpha}c_{\alpha}1_{[0,t^{\alpha}]}(r)1_{A(v^{\alpha},r)}(z)\right)^{2}\, dz\, dr\geq0,
\]
as desired. This shows that the MBRW exists. 

For any $v\in V_{\epsilon}$ and $t\leq\log(1/\epsilon)$,
\[
Var(\xi_{\epsilon}^{v}(t))=\int_{0}^{t}\prod_{i\leq d}\left(1\right)dr=t.
\]
Moreover, if $v\neq w$,
\[
\prod_{i\leq d}\left(1-e^{r}\left|v_{i}-w_{i}\right|\right)_{+}\begin{cases}
>0 & \text{ if }r<-\log\left\Vert v-w\right\Vert _{\infty}\\
=0 & \text{ if }r\geq-\log\left\Vert v-w\right\Vert _{\infty}
\end{cases}.
\]
Therefore, if $t<-\log\left\Vert v-w\right\Vert _{\infty}$,
\[
t\geq Cov(\xi_{\epsilon}^{v}(t),\xi_{\epsilon}^{w}(t))\geq\int_{0}^{t}\prod_{i\leq d}\left(1-e^{r}\left|v_{i}-w_{i}\right|\right)dr\geq\int_{0}^{t}\left(1-e^{r}\left\Vert v-w\right\Vert _{\infty}\right)^{d}dr
\]
\[
\geq t+\sum_{k=1}^{d}\left(\begin{array}{c}
d\\
k
\end{array}\right)(-1)^{k}\left\Vert v-w\right\Vert _{\infty}^{k}\left(\frac{e^{kt}-1}{k}\right)\geq t-C
\]
for some constant $C<\infty$ depending on $d$ only. Similarly, if
$t\geq-\log\left\Vert v-w\right\Vert _{\infty}$,
\[
-\log\left\Vert v-w\right\Vert _{\infty}\geq Cov(\xi_{\epsilon}^{v}(t),\xi_{\epsilon}^{w}(t))\geq-\log\left\Vert v-w\right\Vert _{\infty}-C.
\]
\end{proof}
\begin{prop}
\label{proofrtlbMBRW} Let $\left(\xi_{\epsilon}^{v}:v\in V_{\epsilon}\right)$
be the MBRW and let $m_{\epsilon}$ be the number defined in the line
preceding Theorem \ref{main}. Then, there exists a constant $c>0$
(depending on the dimension) such that
\[
\mathbb{P}\left(\max_{v\in V_{\epsilon}}\xi_{\epsilon}^{v}\geq m_{\epsilon}\right)\geq c.
\]
\end{prop}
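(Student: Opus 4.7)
The plan is a truncated second-moment (Paley--Zygmund) argument in the spirit of Bramson and Zeitouni \cite{bz}. Let $T := \log(1/\epsilon)$ and fix a constant $a = a(d) > 0$ to be chosen large enough. For $v \in V_\epsilon$ set
$$E_v := \left\{ \xi_\epsilon^v(T) \geq m_\epsilon \text{ and } \xi_\epsilon^v(t) \leq \sqrt{2d}\, t + a \text{ for all } 0 \leq t \leq T \right\}, \qquad N := \sum_{v \in V_\epsilon} \mathbf{1}_{E_v}.$$
Since $\{N \geq 1\} \subset \{\max_v \xi_\epsilon^v \geq m_\epsilon\}$, by the Paley--Zygmund inequality the proof reduces to establishing $\mathbb{E} N \geq c$ and $\mathbb{E}[N^2] \leq C$ with constants depending only on $d$. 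The affine barrier is essential; without it the second moment would diverge.

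For the first moment, Proposition \ref{proofdefMBRW} gives that for each $v$ the process $(\xi_\epsilon^v(t))_{t \in [0,T]}$ is a standard Brownian motion. The classical affine-barrier ballot estimate for Brownian motion (via a Girsanov tilt removing the drift $\sqrt{2d}$, followed by the reflection principle) yields
$$\mathbb{P}(E_v) \geq c\, T^{-3/2}\, e^{-m_\epsilon^2 / (2T)}.$$
Substituting $m_\epsilon = \sqrt{2d}\, T - \tfrac{3}{2\sqrt{2d}} \log T$ gives $m_\epsilon^2/(2T) = dT - \tfrac{3}{2}\log T + O(1)$, hence $\mathbb{P}(E_v) \geq c\, \epsilon^d$. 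Summing over the $\epsilon^{-d}$ vertices of $V_\epsilon$ gives $\mathbb{E} N \geq c$.

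For the second moment, expand $\mathbb{E}[N^2] = \sum_{v, u} \mathbb{P}(E_v \cap E_u)$ and group the pairs according to the branching scale $k := \lfloor -\log \|v - u\|_\infty \rfloor \in \{0, 1, \ldots, T\}$. The covariance formula \eqref{defMBRW} implies that for $v \neq u$ the processes $\xi_\epsilon^v(\cdot)$ and $\xi_\epsilon^u(\cdot)$ have vanishing cross-covariance of increments past the branching time, so the two post-branching trajectories are independent Brownian motions, while the two pre-branching trajectories differ by a Gaussian of bounded variance. Conditioning on the shared trajectory at the branching time and applying the ballot estimate to the common segment on $[0,k]$ together with two independent ballot estimates on $[k, T]$ yields, via the standard computation in \cite{bz}, a pair bound whose sum over the $u$ at fixed $v$ and scale $k$ decays geometrically in $k$. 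Summing first over $v$ and then over the $O(T)$ scales gives $\mathbb{E}[N^2] \leq C$, and Paley--Zygmund finishes the proof.

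The main obstacle is the pair bound at each branching scale $k$: one must carefully combine a ballot estimate on the shared segment $[0,k]$ (ending at a height $h \lesssim \sqrt{2d}\, k + a$ while staying below the line) with two \emph{independent} ballot estimates on the descendant branches (each continuing from $h$ to above $m_\epsilon$ on $[k, T]$ while respecting the barrier). This two-sided Bramson-style ballot computation, and in particular the precise tracking of the poly-logarithmic prefactors needed for the sum over $k$ to converge, is exactly where the barrier earns its keep: without it, pairs whose shared history overshoots the linear boundary would blow up the second moment.
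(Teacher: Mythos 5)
Your overall strategy is the same as the paper's: a truncated (Paley--Zygmund) second moment argument, Girsanov tilt plus reflection for the ballot estimates, and a decomposition of pairs by branching scale. However, your barrier has the wrong slope, and this is not a cosmetic difference --- it makes the second moment blow up polynomially in $T = \log(1/\epsilon)$, which defeats the argument.

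The paper constrains the path by $\xi_\epsilon^v(t) \le \tfrac{m_\epsilon}{T}t + 1$, a line of slope $\tfrac{m_\epsilon}{T} = \sqrt{2d} - \tfrac{3}{2\sqrt{2d}}\tfrac{\log T}{T}$, which at time $T$ sits at height $m_\epsilon + 1$. Your constraint is $\xi_\epsilon^v(t) \le \sqrt{2d}\,t + a$, whose height at time $T$ is $m_\epsilon + \tfrac{3}{2\sqrt{2d}}\log T + a$. This leaves an $O(\log T)$ gap between the endpoint requirement $\xi_\epsilon^v(T)\ge m_\epsilon$ and the barrier at time $T$. In the tilted (measure-changed) picture, the one-branch ``tail'' probability therefore picks up a factor proportional to the distance to the barrier at time $T$, i.e.\ an extra $O(\log T)$, and --- more damagingly --- the tilt factor $e^{-\sqrt{2d}\,\tilde\xi(T)}$ now ranges over a window of size $T^{3/2}$ rather than $O(1)$. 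Carrying this through the pair estimate at branching scale $s$, one finds
\[
\mathbb{P}(E_v\cap E_w) \asymp e^{-d(2T-s)}\,\frac{T^{3}(\log T)^2}{(T-s)^3\,s^{3/2}},
\]
whereas the paper's barrier yields the smaller $e^{-d(2T-s)}\,T^{3(2T-s)/(2T)}/\bigl((T-s)^3 s^{3/2}\bigr)$. After multiplying by the $e^{d(2T-s)}$ pairs at scale $s$ and summing over $s$, the paper's sum is $O(1)$, but yours is $\Theta\bigl(T^{3/2}(\log T)^2\bigr)$ (the sum is dominated by $s$ near $T$). Meanwhile $\mathbb{E}[N]$ is only $\Theta(\log T)$ under your barrier, so Paley--Zygmund gives $\mathbb{P}(N>0)\gtrsim T^{-3/2}\to 0$, not a constant. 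In short, the slope $\sqrt{2d}$ is strictly too generous; the $-\tfrac{3}{2\sqrt{2d}}\tfrac{\log T}{T}$ correction in $m_\epsilon/T$ is exactly what keeps the top of the barrier within $O(1)$ of $m_\epsilon$ and is essential for the second moment to close.

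A smaller point: you assert that the pair-bound sum at fixed $v$ ``decays geometrically in $k := \lfloor -\log\|v-u\|_\infty\rfloor$''. The exponential factors actually cancel against the $e^{d(T-k)}$ shell count, and what remains is a polynomial decay in $T-k$ (governed by the $(T-s)^{-3}$ ballot factor), not a geometric decay in $k$. Pairs at the \emph{coarsest} scales (small $k$) contribute almost nothing; it is the pairs at the finest scales ($k$ near $T$) that must be controlled, and it is precisely there that your looser barrier fails.

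To repair the proof, replace the barrier by $\xi_\epsilon^v(t)\le \tfrac{m_\epsilon}{T}t + 1$ (equivalently, constrain $\bar\xi_\epsilon^v(t):=\xi_\epsilon^v(t)-\tfrac{m_\epsilon}{T}t$ to stay below $1$), and tilt by $\mu = m_\epsilon/T$ rather than $\sqrt{2d}$; the rest of your outline then matches the paper's argument.
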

\begin{proof}
We use a second moment method. Let $T=T_{\epsilon}=\log(1/\epsilon)$
and let 
\[
A_{v}=\left\{ \xi_{\epsilon}^{v}\geq m_{\epsilon},\xi_{\epsilon}^{v}(t)\leq\frac{m_{\epsilon}}{T}t+1\text{ for all }0\leq t\leq T\right\} ,
\]
\[
Z=\sum_{v\in V_{\epsilon}}1_{A_{v}}.
\]
Note that
\begin{equation}
\mathbb{P}\left(\max_{v\in V_{\epsilon}}\xi_{\epsilon}^{v}\geq m_{\epsilon}\right)\geq\mathbb{P}\left(Z>0\right)\geq\frac{\left(\mathbb{E}\left[Z\right]\right)^{2}}{\mathbb{\mathbb{E}}\left[Z^{2}\right]},\label{secondmomentmethod}
\end{equation}
where the second inequality follows by Cauchy-Schwarz. We first compute
a lower bound for $\mathbb{E}\left[Z\right]$. Note that
\[
\mathbb{E}\left[Z\right]=\epsilon^{-d}\mathbb{P}\left(A_{v}\right).
\]
Let $\bar{\xi}_{\epsilon}^{v}(t)=\xi_{\epsilon}^{v}(t)-\frac{m_{\epsilon}}{T}t$.
Define a probability measure $\mathbb{Q}$ by
\[
\frac{d\mathbb{P}}{d\mathbb{Q}}=\exp\left(-\frac{m_{\epsilon}}{T}\bar{\xi}_{\epsilon}^{v}(T)-\frac{m_{\epsilon}^{2}}{2T}\right).
\]
Girsanov's Theorem (see \cite[Theorem 5.1]{ks}) implies that $\bar{\xi}_{\epsilon}^{v}(t)$
is Brownian motion under $\mathbb{Q}$. Note that
\[
\mathbb{P}\left(A_{v}\right)=\int_{A_{v}}\exp\left(-\frac{m_{\epsilon}}{T}\bar{\xi}_{\epsilon}^{v}(T)-\frac{m_{\epsilon}^{2}}{2T}\right)d\mathbb{Q}\geq\exp\left(-\frac{m_{\epsilon}}{T}-\frac{m_{\epsilon}^{2}}{2T}\right)\mathbb{Q}(A_{v})
\]
\[
\geq ce^{-\sqrt{2d}}\epsilon^{d}T^{3/2}\mathbb{Q}(A_{v})
\]
for some absolute constant $c>0$. It follows easily from the Reflection
Principle (see \cite[Proposition 6.19]{ks}) that $\mathbb{Q}(A_{v})=\mathbb{Q}(\bar{\xi}_{\epsilon}^{v}\geq0,\bar{\xi}_{\epsilon}^{v}(t)\leq1\text{ for all }0\leq t\leq T)\geq cT^{-3/2}$
for some absolute constant $c>0$. Combining the three previous displays,
we obtain
\begin{equation}
\mathbb{E}\left[Z\right]\geq c\label{lbfirstmoment}
\end{equation}
for some constant $c>0$, depending on the dimension $d$. 

We now compute an upper bound for $\mathbb{E}\left[Z^{2}\right]$.
Note that
\begin{equation}
\mathbb{E}\left[Z^{2}\right]=\sum_{v,w\in V_{\epsilon}}\mathbb{P}\left(A_{v}\cap A_{w}\right)=\sum_{v,w\in V_{\epsilon}}\mathbb{P}\left(\bar{\xi}_{\epsilon}^{v},\bar{\xi}_{\epsilon}^{w}\geq0,\bar{\xi}_{\epsilon}^{v}(t),\bar{\xi}_{\epsilon}^{w}(t)\leq1\text{ for all }0\leq t\leq T\right).\label{ZZ}
\end{equation}
Both $\xi_{\epsilon}^{v}(\cdot),\xi_{\epsilon}^{w}(\cdot)$ are Brownian
motions, which have independent increments starting at time $s=s_{v,w}=-\log\left(\max\left\{ \epsilon,\left\Vert v-w\right\Vert _{\infty}\right\} \right)$.
Therefore,
\[
\mathbb{P}\left(A_{v}\cap A_{w}\right)\leq\sum_{-\infty<x,y\leq1}p(x)p(y)\mathbb{P}\left(\bar{\xi}_{\epsilon}^{v}(t),\bar{\xi}_{\epsilon}^{w}(t)\leq1\text{ for all }t\in[0,s],\bar{\xi}_{\epsilon}^{v}(s)\in[x-1,x],\bar{\xi}_{\epsilon}^{w}(s)\in[y-1,y]\right)
\]
\begin{equation}
\leq\sum_{-\infty<y\leq x\leq1}2p(x)p(y)\mathbb{P}\left(\bar{\xi}_{\epsilon}^{v}(t),\bar{\xi}_{\epsilon}^{w}(t)\leq1\text{ for all }t\in[0,s],\bar{\xi}_{\epsilon}^{v}(s)\in[x-1,x],\bar{\xi}_{\epsilon}^{w}(s)\in[y-1,y]\right),\label{conditioning}
\end{equation}
where
\[
p(x)=\sup_{z\in[x-1,x]}\mathbb{P}\left(\bar{\xi}_{\epsilon}^{v}(t)\leq1-z\text{ for all }t\in[0,T-s],\bar{\xi}_{\epsilon}^{v}(T-s)\geq-z\right).
\]

Assume $0<s<T$. Applying Girsanov's Theorem and the Reflection Principle,
we obtain 
\[
p(x)\leq C\exp\left(\frac{m_{\epsilon}}{T}x-\frac{m_{\epsilon}^{2}}{2T^{2}}(T-s)\right)\frac{(1-x)}{\left(T-s\right)^{3/2}}
\]
for some constant $C$. Therefore, from \eqref{conditioning} and
the last display,
\[
\mathbb{P}\left(A_{v}\cap A_{w}\right)\leq\sum_{-\infty<y\leq x\leq1}Cp(x)^{2}\mathbb{P}\left(\bar{\xi}_{\epsilon}^{v}(t),\bar{\xi}_{\epsilon}^{w}(t)\leq1\text{ for all }t\in[0,s],\bar{\xi}_{\epsilon}^{v}(s)\in[x-1,x],\bar{\xi}_{\epsilon}^{w}(s)\in[y-1,y]\right)
\]
\[
\leq\sum_{-\infty<x\leq1}Cp(x)^{2}\mathbb{P}\left(\bar{\xi}_{\epsilon}^{v}(t)\leq1\text{ for all }t\in[0,s],\bar{\xi}_{\epsilon}^{v}(s)\in[x-1,x]\right).
\]
Applying Girsanov's Theorem and the Reflection Principle again,
\[
\mathbb{P}\left(A_{v}\cap A_{w}\right)\leq C\sum_{-\infty<x\leq1}p(x)^{2}\exp\left(-\frac{m_{\epsilon}}{T}x-\frac{m_{\epsilon}^{2}}{2T^{2}}s\right)\frac{(1-x)}{s^{3/2}}
\]
\begin{equation}
\leq C\frac{1}{\left(T-s\right)^{3}s^{3/2}}\exp\left(-\frac{m_{\epsilon}^{2}}{2T^{2}}(2T-s)\right)\label{intersection1}
\end{equation}
for some constant $C$. 

Consider now the case $s=0$. Then, the independence of $\xi_{\epsilon}^{v}(\cdot)$
and $\xi_{\epsilon}^{v}(\cdot)$ implies 
\[
\mathbb{P}\left(A_{v}\cap A_{w}\right)=\mathbb{P}\left(A_{v}\right)^{2}=\mathbb{P}\left(\bar{\xi}_{\epsilon}^{v}(t)\leq1\text{ for all }t\in[0,T],\bar{\xi}_{\epsilon}^{v}(T)\geq0\right)^{2}
\]
\begin{equation}
\leq C\frac{1}{T^{3}}\exp\left(-\frac{m_{\epsilon}^{2}}{T}\right),\label{intersection2}
\end{equation}
where the last bound follows from Girsanov's Theorem and the Reflection
Principle. In the case $s=T$,
\begin{equation}
\mathbb{P}\left(A_{v}\cap A_{w}\right)\leq\mathbb{P}\left(A_{v}\right)\leq C\frac{1}{T^{3/2}}\exp\left(-\frac{m_{\epsilon}^{2}}{2T}\right).\label{intersection3}
\end{equation}
In consequence, for any pair $v,w\in V_{\epsilon}$, displays \eqref{intersection1},
\eqref{intersection2} and \eqref{intersection3} imply 
\[
\mathbb{P}\left(A_{v}\cap A_{w}\right)\leq C\frac{1}{\left(\left(T-s\right)\vee1\right)^{3}\left(s\vee1\right)^{3/2}}\exp\left(-\frac{m_{\epsilon}^{2}}{2T^{2}}(2T-s)\right),
\]
where $\cdot\vee\cdot=\max\left\{ \cdot,\cdot\right\} $. For any
fixed $v\in V_{\epsilon}$, there are $O(e^{(d-1)(T-s)})$ points
$w$ such that $-\log\left\Vert v-w\right\Vert _{\infty}=s$. Therefore,
from \eqref{ZZ}, the last display, we obtain
\[
\mathbb{E}\left[Z^{2}\right]\leq C\sum_{0\leq s\leq T}\left|V_{\epsilon}\right|e^{(d-1)(T-s)}\frac{1}{\left(\left(T-s\right)\vee1\right)^{3}\left(s\vee1\right)^{3/2}}\exp\left(-\frac{m_{\epsilon}^{2}}{2T^{2}}(2T-s)\right)
\]
\[
\leq C+C\sum_{0<s<T}\left|V_{\epsilon}\right|e^{(d-1)(T-s)}\frac{\exp\left(-\frac{m_{\epsilon}^{2}}{2T^{2}}(2T-s)\right)}{\left(T-s\right)^{3}s^{3/2}}
\]
\[
=C+C\sum_{0<s<T}e^{dT}e^{(d-1)(T-s)}\frac{\exp\left(-\frac{m_{\epsilon}^{2}}{2T^{2}}(2T-s)\right)}{\left(T-s\right)^{3}s^{3/2}}.
\]
But,
\[
\sum_{0<s<T}e^{dT}e^{(d-1)(T-s)}\frac{\exp\left(-\frac{m_{\epsilon}^{2}}{2T^{2}}(2T-s)\right)}{\left(T-s\right)^{3}s^{3/2}}\leq\sum_{0<s<T}e^{d(2T-s)}\frac{\exp\left(\left(-d+\frac{3\log T}{2T}\right)(2T-s)\right)}{\left(T-s\right)^{3}s^{3/2}}
\]
\[
=\sum_{0<s<T}\frac{\exp\left(\frac{3}{2}\frac{\log T}{T}(2T-s)\right)}{(T-s)^{3}s^{3/2}}\leq C\sum_{0<s<T/2}\frac{1}{s^{3/2}}+\sum_{T/2\leq s<T}\frac{\exp\left(\frac{3}{2}\frac{\log T}{T}(T-s)\right)}{(T-s)^{3}}\frac{T^{3/2}}{s^{3/2}}
\]
\[
\leq C+C\sum_{0<s\leq T/2}\frac{T^{3s/2T}}{s^{3}}\leq C<\infty,
\]
because the last expression is (eventually) decreasing in $T$. Proposition
\ref{proofrtlbMBRW} follows from the last display, \eqref{secondmomentmethod}
and \eqref{lbfirstmoment}. \end{proof}
\begin{prop}
\label{proofrtubMBRW} Let $\left(\xi_{\epsilon}^{v}:v\in V_{\epsilon}\right)$
be the MBRW and let $m_{\epsilon}$ be the number defined in the line
preceding Theorem \ref{main}. Then, there exist constants $0<c,C<\infty$
(depending on the dimension $d$) such that
\[
\mathbb{P}\left(\max_{v\in A}\xi_{\epsilon}^{v}\geq m_{\epsilon}+z\right)\leq C\left(\epsilon^{d}\left|A\right|\right)^{1/2}e^{-cz}
\]
for all $A\subset V_{\epsilon}$, $z\in\mathbb{R}$ and $\epsilon>0$
small enough.\end{prop}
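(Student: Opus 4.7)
The plan is to combine a ballot-type restriction on the Brownian trajectories $t \mapsto \xi_\epsilon^v(t)$ with a first-moment (union) bound, mirroring the Girsanov-plus-reflection computation used for a lower bound in the proof of Proposition \ref{proofrtlbMBRW}. Throughout write $T = \log(1/\epsilon)$. It suffices to treat $z \geq 0$, since for sufficiently negative $z$ the right-hand side of the claim exceeds $1$.

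First I would establish a single-site estimate. For each $v \in V_\epsilon$, let
\[
E_v = \Bigl\{\xi_\epsilon^v \geq m_\epsilon + z,\ \xi_\epsilon^v(t) \leq \tfrac{m_\epsilon + z}{T}\, t + 1\text{ for all }t \in [0,T]\Bigr\}.
\]
Tilting the drift by $\alpha := (m_\epsilon + z)/T$ via Girsanov's theorem, the event $E_v$ becomes a ballot event for the resulting standard Brownian motion $\tilde\xi(t) := \xi_\epsilon^v(t) - \alpha t$ that ends in $[0,1]$ and stays below $1$; the reflection principle together with the explicit joint density of $(\tilde\xi(T), \max \tilde\xi)$ show this ballot event has $\mathbb{Q}$-probability of order $T^{-3/2}$. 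The Girsanov density factor is $\exp(-\alpha^2 T/2) = C\, T^{3/2}\, \epsilon^d\, e^{-\sqrt{2d}\,z - z^2/(2T)}$ once $m_\epsilon$ is substituted, and the two $T^{3/2}$ factors cancel, giving $\mathbb{P}(E_v) \leq C\, \epsilon^d\, e^{-cz}$. A union bound over $v \in A$ then yields
\[
\mathbb{P}\Bigl(\bigcup_{v\in A} E_v\Bigr) \leq C\,|A|\,\epsilon^d\,e^{-cz}.
\]

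The main difficulty is the complementary overshoot event $F$ on which some $\xi_\epsilon^v$ exceeds the barrier $\alpha t + 1$ somewhere in $[0,T]$ yet still ends above $m_\epsilon + z$. I would handle $F$ by a peeling argument in the integer level $k$ of the maximal overshoot and the integer time $t_0$ at which it is first attained. At each such $t_0$, the process $(\xi_\epsilon^v(t_0) : v \in V_\epsilon)$ is, by \eqref{defMBRW}, essentially a $t_0$-scale MBRW on the coarser grid $V_{e^{-t_0}}$, so a further Girsanov-plus-reflection estimate at scale $t_0$ controls the contribution of each pair $(k, t_0)$. Choosing the barrier widths so that the polynomial weights produced by the peeling sum in $k$ and $t_0$ contribute only an absolute constant, one gets $\mathbb{P}(F) \leq C\,|A|\,\epsilon^d\,e^{-cz}$ of the same form.

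Combining the two estimates yields $\mathbb{P}(\max_{v\in A}\xi_\epsilon^v \geq m_\epsilon + z) \leq C\,|A|\,\epsilon^d\, e^{-cz}$. Since $A \subset V_\epsilon$ and $|V_\epsilon| = \epsilon^{-d}$, one has $|A|\,\epsilon^d \leq 1$, so $|A|\,\epsilon^d \leq (|A|\,\epsilon^d)^{1/2}$, and the stated inequality follows. The hard part is the peeling control of $\mathbb{P}(F)$: the barrier has to be loose enough that the overshoot contributions sum (only polynomially in the overshoot level and the slicing time) while tight enough that the leading exponent is the correct $\sqrt{2d}\,z$. Everything else is routine bookkeeping from Girsanov and the reflection principle.
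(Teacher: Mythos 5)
Your high-level decomposition is the right one, and the single-site Girsanov/reflection computation for $E_v$ is sound, but the claimed intermediate bound $\mathbb{P}(F)\leq C\,|A|\,\epsilon^{d}e^{-cz}$ is genuinely false, and the false step masks the actual mechanism that produces the exponent $1/2$. Take $|A|=1$, $z=0$. Then your claim together with the $E_v$ bound would give $\mathbb{P}(\xi_\epsilon^{v}\geq m_\epsilon)\leq C\epsilon^{d}$. But $\xi_\epsilon^{v}\sim N(0,T)$ with $T=\log(1/\epsilon)$, and a direct Gaussian tail computation using $m_\epsilon=\sqrt{2d}\,T-\tfrac{3}{2\sqrt{2d}}\log T$ gives $\mathbb{P}(\xi_\epsilon^{v}\geq m_\epsilon)\asymp T\,\epsilon^{d}$, which is not $O(\epsilon^{d})$. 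So the overshoot event necessarily carries an extra polynomial-in-$T$ factor, and the final bound cannot be linear in $|A|\epsilon^{d}$ with an $|A|$-independent constant. The target $(\epsilon^{d}|A|)^{1/2}$ is not a generous weakening of a stronger linear bound; it is exactly what you need to absorb the $T$-prefactor uniformly in $|A|$, and it must be produced by the argument, not recovered by $x\leq\sqrt{x}$ at the end.

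The paper's proof makes the barrier level $\lambda$ depend on $|A|$: it sets $\lambda=z+(|V_\epsilon|/|A|)^{1/4}$ and uses a bent barrier $\psi_t=\lambda+10\log(\min\{t,T-t\})_+$ rather than your straight line $\alpha t+1$. The bend is what makes the peeling sum over the first-exceedance time $t_0$ converge (giving $\mathbb{P}(G(\lambda))\leq C\lambda e^{-\sqrt{2d}\lambda}$ with only a linear prefactor), and the shift by $(|V_\epsilon|/|A|)^{1/4}$ is what converts that global, $A$-independent overshoot bound into something dominated by $(\epsilon^{d}|A|)^{1/2}e^{-cz}$; the $1/2$ comes precisely from the fact that $e^{-\sqrt{2d}x^{-1/4}}\ll x^{1/2}$ for small $x=\epsilon^{d}|A|$. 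With a straight barrier and no $|A|$-dependence in the barrier level, the peeling over $t_0$ does not close: summing $e^{dt_0}\cdot e^{-\alpha^2 t_0/2}\cdot(\text{completion probability})$ over $t_0\in\{1,\dots,T\}$ produces an extra power of $T$ that you have no mechanism to remove. A secondary point: the paper first passes from the MBRW $\xi_\epsilon$ to a genuine $2^d$-ary BRW $\eta_{\epsilon/C}$ via Slepian-type comparison (citing \cite{dz}), because the peeling/first-exceedance decomposition really uses the tree's independent-increment structure; working directly with the MBRW as you propose requires an extra justification that the MBRW's correlations do not spoil the decomposition, which is exactly what the transfer lemma is for.
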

\begin{proof}
We introduce the $d$-ary branching random walk (BRW) as follows:
let $\epsilon=2^{-n}$ for some $n\in\mathbb{N}$. At each time $T_{k}=k\log2;k=0,1,\ldots,n$,
we partition $[0,1)^{d}$ into $2^{kd}$ disjoint boxes of side length
$2^{-k}$. For a pair $v,w\in V_{\epsilon}$, denote by $l(v,w)$
the first time that $v,w$ lie in different boxes of the partition.
With this notation, define the BRW as the Gaussian field $\left(\eta_{\epsilon}^{v}(t):v\in V_{\epsilon},t\in[0,T_{n}]\right)$
with
\[
Cov(\eta_{\epsilon}^{v}(t),\eta_{\epsilon}^{w}(s))=\min\left\{ t,s,l(v,w)\right\} .
\]
For simplicity, let $T=T_{n}$ and $\eta_{\epsilon}^{v}=\eta_{\epsilon}^{v}(T)$.
It is not hard to show that such a field exists. Note that our BRW
can be interpreted as a branching Brownian motion that splits every
$\log2$ units of time into $2^{d}$ independent Brownian motions.
Following the argument given in \cite[Lemma 3.7]{dz}, one can show
that there exists $C$ (depending on the dimension) such that
\[
\mathbb{P}\left(\max_{v\in A}\xi_{\epsilon}^{v}\geq m_{\epsilon}+\lambda\right)\leq C\mathbb{P}\left(\max_{v\in A}\eta_{\epsilon/C}^{v}\geq m_{\epsilon}+\lambda\right)
\]
for all $A\subset V_{\epsilon}\subset V_{\epsilon/C}$ and all $\lambda\in\mathbb{R}$.
Therefore, it is enough to prove Proposition \ref{proofrtubMBRW}
for the BRW. We do so by following very closely the proof in \cite[Lemma 3.8]{bdz}. 

We will use the following estimate, which is proved in \cite[Lemma 3.6]{bdz}:
let $W_{s}$ be standard Brownian motion under $\mathbb{P}$ and fix
a large constant $C_{1}$. Then, if 
\[
\mu_{q,r}^{\ast}(x)=\mathbb{P}\left(W_{q}\in dx,W_{s}\leq r+C_{1}(\min\left\{ s,q-s\right\} )^{1/20}\text{ for all }0\leq s\leq q\right)/dx,
\]
we have
\begin{equation}
\mu_{q,r}^{\ast}(x)\leq C_{2}r(r-x)/q^{3/2}\label{quasibridge}
\end{equation}
for all $x\leq r$, where $C_{2}$ depends on $C_{1}$.

We next define the event
\[
G(\lambda)=\left\{ \exists t\leq T,v\in V_{\epsilon}:\eta_{\epsilon}^{v}(t)-\frac{m_{\epsilon}}{T}t-10\log\left(\min\left\{ t,T-t\right\} \right)_{+}\geq\lambda\right\} 
\]
and we prove the following claim:
\begin{claim}
There exists a constant $C>0$ (depending on $d$) such that
\[
\mathbb{P}\left(G(\lambda)\right)\leq C\lambda e^{-\sqrt{2d}\lambda}
\]
for all $\lambda\geq1$. \end{claim}
\begin{proof}
Following the proof of \cite[Lemma 3.7]{bdz}, we define $\psi_{t}=\lambda+10\log\left(\min\left\{ t,T-t\right\} \right)_{+}$
and $\chi_{T_{k}}(x)=\mathbb{P}\left(\eta_{\epsilon}^{v}(t)-\frac{m_{\epsilon}}{T}t\leq\psi_{t}\text{ for all }t\leq T_{k},\eta_{\epsilon}^{v}(T_{k})-\frac{m_{\epsilon}}{T}T_{k}\in dx\right)/dx$.
Then, by decomposing based on the first time such that $\eta_{\epsilon}^{v}(t)-\frac{m_{\epsilon}}{T}t\geq\psi_{t}$,
we obtain that 
\[
\mathbb{P}\left(G(\lambda)\right)\leq\sum_{k=1}^{n}2^{dk}\int_{-\infty}^{\psi_{T_{k}}}\chi_{T_{k}}(x)\mathbb{P}\left(\max_{s\leq\log2}\eta_{\epsilon}^{v}(t)\geq\psi_{T_{k}}-x-C\right)dx,
\]
where $C$ is an absolute constant. Display \eqref{quasibridge} and
Girsanov's Theorem imply that
\[
\chi_{T_{k}}(x)\leq C2^{-dk}e^{-x\left(\sqrt{2d}-O(\log T/T)\right)}\psi_{T_{k}}(\psi_{T_{k}}-x),
\]
where $C$ depends on $d$. On the other hand,
\[
\mathbb{P}\left(\max_{s\leq\log2}\eta_{\epsilon}^{v}(t)\geq\psi_{T_{k}}-x-C\right)\leq Ce^{-\left(\psi_{T_{k}}-x-C\right)^{2}/2\log2}
\]
for some absolute constant $C$. Therefore, by the three previous
displays, we obtain
\[
\mathbb{P}\left(G(\lambda)\right)\leq C\sum_{k=1}^{n}\psi_{T_{k}}\int_{-\infty}^{\psi_{T_{k}}}e^{-x\left(\sqrt{2d}-O(\log T/T)\right)}(\psi_{T_{k}}-x)e^{-\left(\psi_{T_{k}}-x-C\right)^{2}/2\log2}dx.
\]
A change of variables $u=\psi_{T_{k}}-x$ yields
\[
\mathbb{P}\left(G(\lambda)\right)\leq C\sum_{k=1}^{n}\psi_{T_{k}}e^{-\sqrt{2d}\psi_{T_{k}}}
\]
\[
=C\sum_{k=1}^{n}\left(\lambda+10\log\left(\min\left\{ T_{k},T-T_{k}\right\} \vee1\right)\right)e^{-\sqrt{2d}\left(\lambda+10\log\left(\min\left\{ T_{k},T-T_{k}\right\} \vee1\right)\right)}
\]
\[
=C\sum_{k=1}^{n}\frac{\left(\lambda+10\log\left(\min\left\{ T_{k},T-T_{k}\right\} \vee1\right)\right)}{\left(\min\left\{ T_{k},T-T_{k}\right\} \vee1\right)^{10}}e^{-\sqrt{2d}\lambda}\leq C\lambda e^{-\sqrt{2d}\lambda},
\]
where $\cdot\vee\cdot=\max\left\{ \cdot,\cdot\right\} $, and the
convergence of the last sum is due the exponent $10$ in the denominator
(with room to spare).
\end{proof}
We now finish the proof of Proposition \ref{proofrtubMBRW}. Fix $A\subset V_{\epsilon}$
and $z\in\mathbb{R}$. For $z+\left(\left|V_{\epsilon}\right|/\left|A\right|\right)^{1/4}\geq1$,
let $\lambda=z+\left(\left|V_{\epsilon}\right|/\left|A\right|\right)^{1/4}$,
and continuing with the notation of Claim 4.4, we let
\[
F_{v}=\left\{ \eta_{\epsilon}^{v}(t)\leq\frac{m_{\epsilon}}{T}t+\psi_{t}\text{ for all }0\leq t\le T,\eta_{\epsilon}^{v}\geq m_{\epsilon}+z\right\} ,
\]
where $v\in V_{\epsilon}$. We now compute
\[
\mathbb{P}\left(F_{v}(\lambda)\right)=\int_{z}^{\psi_{T}}\frac{d\mathbb{P}}{d\mathbb{Q}}(x+m_{\epsilon})\chi_{T}(x)dx
\]
\[
\leq C\int_{z}^{\psi_{T}}2^{-dn}e^{-x\left(\sqrt{2d}-O(\log T/T)\right)}\psi_{T}\left(\psi_{T}-x\right)dx
\]
\[
\leq C2^{-dn}\psi_{T}e^{-\sqrt{2d}\psi_{T}}\int_{0}^{\psi_{T}-z}e^{u}u\, du\leq C2^{-dn}\psi_{T}e^{-\sqrt{2d}z}\left(\psi_{T}-z\right).
\]
Recalling that $\psi_{T}=\lambda=z+\left(\left|V_{\epsilon}\right|/\left|A\right|\right)^{1/4}$,
we obtain
\[
\mathbb{P}\left(F_{v}(\lambda)\right)\leq C2^{-dn}\left(z+\left(\left|V_{\epsilon}\right|/\left|A\right|\right)^{1/4}\right)\left(\left|V_{\epsilon}\right|/\left|A\right|\right)^{1/4}e^{-\sqrt{2d}z}
\]
\[
\leq C2^{-dn}\left(\left|V_{\epsilon}\right|/\left|A\right|\right)^{1/2}e^{-cz}.
\]
Adding the last display for $v\in A$ and using Claim 4.4, we obtain
\[
\mathbb{P}\left(\max_{v\in A}\eta_{\epsilon}^{v}\geq m_{\epsilon}+z\right)\leq C\left(\epsilon^{d}\left|A\right|\right)^{1/2}e^{-cz}+C\left(z+\left(\left|V_{\epsilon}\right|/\left|A\right|\right)^{1/4}\right)e^{-\sqrt{2d}\left(z+\left(\left|V_{\epsilon}\right|/\left|A\right|\right)^{1/4}\right)}
\]
\[
\leq C\left(\epsilon^{d}\left|A\right|\right)^{1/2}e^{-cz}
\]
for some $0<c,C<\infty$ (depending on $d$ only), as desired. The
previous computation was made under the assumption $z+\left(\left|V_{\epsilon}\right|/\left|A\right|\right)^{1/4}\geq1$.
Assume now $\left(\left|V_{\epsilon}\right|/\left|A\right|\right)^{1/4}-1\leq-z$.
In this case,
\[
\left(\epsilon^{d}\left|A\right|\right)^{1/2}e^{-cz}\geq c\left(\epsilon^{d}\left|A\right|\right)^{1/2}e^{c\left(\epsilon^{d}\left|A\right|\right)^{-1/4}}.
\]
But $\inf_{0<x<1}x^{1/2}e^{cx^{-1/4}}\geq c>0$, where $c$ depends
only $d$. Therefore, in this case, Proposition \ref{proofrtubMBRW}
holds trivially by adjusting the constant $C$.\end{proof}


\begin{thebibliography}{10}
\bibitem[1]{k} N. V. Krylov. Introduction to the theory of random
processes. \emph{AMS, Providence, RI, 2002}.

\bibitem[2]{rf} R. J. Adler and J. E. Taylor. Random Fields and Geometry.
\emph{Springer Monographs in Mathematics. Springer, 2007.}

\bibitem[3]{bz} M. Bramson and O. Zeitouni. Tightness of the recentered
maximum of the two-dimensional discrete Gaussian free field. \emph{Comm.
Pure Appl. Math. 65:1-20, 2011}.

\bibitem[4]{d} J. Ding. Exponential and double exponential tails
for maximum of two-dimensional discrete Gaussian free field, 2011.
\emph{\url{http://arxiv.org/abs/1105.5833}}.

\bibitem[5]{bdz} M. Bramson, J. Ding and O. Zeitouni. Convergence
in law of the maximum of the two-dimensional discrete Gaussian free
field, 2013. \emph{\url{http://arxiv.org/abs/1301.6669}}.

\bibitem[6]{bdg} E. Bolthausen, J.-D. Deuschel, and G. Giacomin.
Entropic repulsion and the maximum of the two- dimensional harmonic
crystal. \emph{Ann. Probab., 29(4):1670–1692, 2001}.

\bibitem[7]{da} O. Daviaud. Extremes of the discrete two-dimensional
Gaussian free field. \emph{Ann. Probab., 34(3):962–986, 2006}.

\bibitem[8]{hmp} X. Hu , J. Miller and Y. Peres. Thick points of
the Gaussian free field. \emph{Ann. Probab., 38(2): 896–926, 2010}. 

\bibitem[9]{dy} E. B. Dynkin. Markov processes and random fields.
\emph{Bull. Amer. Math. Soc. (N.S.), 3(3):975–999, 1980}. 

\bibitem[10]{s} S. Sheffield. Gaussian free fields for mathematicians.
\emph{Probab. Theory Related Fields 139:521–541, 2007}.

\bibitem[11]{drsv} B. Duplantier, R. Rhodes, S. Sheffield and V.
Vargas. Critical Gaussian Multiplicative Chaos: Convergence of the
Derivative Martingale, 2012. \emph{\url{http://arxiv.org/abs/1206.1671}}.

\bibitem[12]{m} T. Madaule. Maximum of a log-correlated Gaussian
field, 2013. \emph{\url{http://arxiv.org/abs/1307.1365}}.

\bibitem[13]{mrv} T. Madaule, R. Rhodes and V. Vargas. Glassy phase
and freezing of log-correlated Gaussian potentials. \emph{\url{http://arxiv.org/abs/1310.5574}.}

\bibitem[14]{ce} R. J. Adler. An Introduction to Continuity, Extrema
and Related Topics for General Gaussian Processes. \emph{Lecture Notes
- Monograph Series. Institute Mathematical Statistics, Hayward, CA,
1990}. 

\bibitem[15]{dz} J. Ding and O. Zeitouni. Extreme values for two-dimensional
discrete Gaussian free field, 2012. \url{http://arxiv.org/abs/1206.0346}.

\bibitem[16]{ks} I. Karatzas and S. E. Shreve. Brownian Motion and
Stochastic Calculus. \emph{Springer-Verlag New York, NY, 1988.}\end{thebibliography}
\end{document}